\documentclass[10pt]{amsart}
\usepackage{amssymb}
\usepackage{amsmath}
\usepackage{amsfonts}
\usepackage{mathrsfs}
\usepackage{psfrag}
\usepackage{verbatim}
\usepackage{color}
\usepackage{bm}
\usepackage[usenames,dvipsnames]{xcolor}
\usepackage{graphicx} 
\usepackage{bm}
\usepackage{bbm}
\usepackage{dsfont}
\usepackage{enumerate}
\usepackage{marginnote}
\usepackage{comment}
\usepackage{subcaption}
\newcommand{\ind}{\mathds{1}}

\usepackage[left]{lineno}

\usepackage[colorlinks=true]{hyperref}
\hypersetup{citecolor=MidnightBlue}
\DeclareGraphicsExtensions{}
\theoremstyle{plain}
\newtheorem{theorem}{Theorem}[section]

\newtheorem{lemma}[theorem]{Lemma}
\newtheorem{proposition}[theorem]{Proposition}

\theoremstyle{definition}
\newtheorem{assumption}[theorem]{Assumption}
\newtheorem{definition}[theorem]{Definition}
\newtheorem{remark}[theorem]{Remark}
\theoremstyle{remark}
\newtheorem{notation}[theorem]{Notation}

\newcommand{\eps}{\varepsilon}

\newcommand{\la}{\left\langle}
\newcommand{\ra}{\right\rangle}

\newcommand{\BR}{\mathbb{R}}
\newcommand{\BP}{\mathbb{P}}
\newcommand{\BE}{\mathbb{E}}
\newcommand{\BZ}{\mathbb{Z}}
\newcommand{\BN}{\mathbb{N}}

\newcommand{\sfT}{\mathsf{T}} 
\newcommand{\itd}{\mathtt{Id}} 
\newcommand{\sfN}{\mathsf{N}} 
\newcommand{\sfM}{\mathsf{M}}
\newcommand{\sfu}{\mathsf{u}} 
\newcommand{\Sk}{\mathsf{Sk}}

\newcommand{\sfK}{\mathsf{K}} 
\newcommand{\sfF}{\mathsf{F}}

\newcommand{\II}{\mathtt{I}}
\newcommand{\JJ}{\mathtt{J}}

\newcommand{\LL}{\mathtt{L}}
\newcommand{\MM}{\mathtt{M}}
\newcommand{\NN}{\mathtt{N}}

\newcommand{\ren}{\mathtt{q}}

\newcommand{\filt}{\mathscr{F}}

\newcommand{\sfz}{\mathsf{z}}

\begin{document}
\title{Random perturbations of systems with periodic impulse effects}
	\author[Ashif Khan and Chetan D. Pahlajani]{Ashif Khan and Chetan D. Pahlajani}
	\address{Department of Mathematics\\ Indian Institute of Technology Gandhinagar}
	\email{khanashif@iitgn.ac.in, cdpahlajani@iitgn.ac.in}
	\date{\today.}
     \thanks{The first author would like to thank Government of India for financial support through the Prime Minister's Research Fellowship (PMRF) scheme. The second author's research was supported by ANRF project number MTR/2023/000545.}
	\maketitle
    
{\small \centerline{Department of Mathematics, Indian Institute of Technology Gandhinagar, Palaj, Gandhinagar 382055, India}}
     
	\begin{abstract}
		The principal aim of the present work is to explore limit theorems for small random perturbations of dynamical systems with periodic impulse effects, in the limit of vanishing noise intensity. 
		We start with a system whose time evolution is governed by a nonlinear ordinary differential equation in between impulses, and a nonlinear resetting map at impulses; the latter are assumed to arrive in a time-periodic manner. 
		We next consider small state-dependent Brownian perturbations of this system and explore the zero noise limit on finite, but arbitrary, time horizons. 
		For the resulting stochastic system with impulse effects, we prove convergence to the underlying deterministic impulsive system as the noise goes to zero.
		More importantly, we prove convergence of the rescaled fluctuation process about the deterministic limit in a strong pathwise sense on finite time intervals to a limiting fluctuation process governed by a linear time-dependent stochastic differential equation in between impulses and a linear time-dependent resetting map at impulses.
The results are illustrated numerically for a periodically kicked nonlinear pendulum with state-dependent kick sizes.
	\end{abstract}

\section{Introduction}\label{S:Introduction}
Several problems in engineering and the natural sciences can be formulated in terms of \textit{systems with impulse effects} ({\sc sie}), where the continuous evolution of the state according to an ordinary differential equation ({\sc ode}) is punctuated by instantaneous jumps or impulses governed by suitable resetting maps, with the impulses occuring at discrete times. The impulses may model impacts in mechanical systems \cite{Brogliato-book} such as walking robots \cite{GAP} or percussive drilling \cite{DDD}, but may also serve as convenient abstractions in situations where a control action or forcing function is applied in extremely short intense bursts. Examples of this latter category arise in impulsive control for robotic or spacecraft systems \cite{KantMukherjee_SysConLett2020}, \cite{KantMukherjeeKhalil_NonlinDyn2021}, \cite{JCS_OptimalSpacecraftGuidance}, kicked oscillators \cite{WY_CMP2003}, \cite{LinYoung_JFPTA}, \cite{Moehlis_PrecisionNoisyOscillators}, neuroscience \cite{Lin_SIADS2006}, \cite{LajoieShea-Brown}, among many others. 

Many realistic systems are better modeled by also incorporating in the dynamics a stochastic perturbation, frequently taken to be a Brownian motion; this may reflect fluctuating external forces or unmodelled aspects of the true dynamics. 
The state of the system is now given by a stochastic process which solves a stochastic differential equation ({\sc sde}) driven by Brownian motion \cite{KS91}, \cite{Oksendal}.
Since the random perturbations in question are typically small, the asymptotic behavior of these {\sc sde} in the limit of vanishing noise intensity is of great interest.
One line of inquiry is to explore the zero noise limit of the stochastic process, and to also characterize the limiting (rescaled) fluctuations of the process about this limit; these are questions in the spirit of the Law of Large Numbers and the Central Limit Theorem.
Also of great interest are results on large deviations \cite{DZ98}, \cite{FW_RPDS}, which quantify the exponential rate of decay for probabilities of rare events in the zero noise limit. 
These asymptotic questions have been extensively investigated for randomly perturbed smooth systems, and also to some extent in nonsmooth settings; see, for instance, \cite{BOQ}, \cite{JeffreySimpson_NonFilippovSmoothing_NonlinDyn_2014}, \cite{HZG_Physica-D2022} and the references therein.
However, the case of {\sc sie} appears to not yet have been considered. 

In the present work, we consider an {\sc sie} whose state vector $x(t) \in \BR^d$ evolves according to a smooth vector field $b:\BR^d \to \BR^d$ in between impulses, with a smooth resetting map $x \mapsto h(x)$ governing the dynamics at impulses, the latter arriving in a time-periodic manner; such systems arise when studying the dynamics of {\sc ode} forced by a periodic train of delta functions (Remark \ref{R:pks}).
Subjecting the continuous ({\sc ode}) dynamics to small Brownian perturbations of size $\eps \in (0,1)$ with state-dependent diffusion matrix $\sigma$, we study the dynamics of the resulting stochastic process $X^\eps_t$, now governed by an {\sc sde} with impulse effects, in the limit as $\eps \searrow 0$. 
Our first finding is, of course, that the process $X^\eps_t$ converges to $x(t)$ as $\eps \searrow 0$. 
More importantly, we prove convergence of the rescaled fluctuation process $\frac{X^\eps_t-x(t)}{\eps}$ in a strong pathwise sense on finite time intervals to a limiting fluctuation process $Z_t$ (independent of $\eps$) governed by a linear time-dependent {\sc sde} in between impulses and a linear time-dependent resetting map at impulses.
Put together, our calculations yield the small noise expansion $X^\eps_t = x(t) + \eps Z_t + R^\eps_t$ with rigorous estimates on the remainder $R^\eps_t$. 


For the corresponding problem without impulses, i.e., in the special case $h(x) \equiv x$, the classical work of Blagoveshchenskii \cite{Blagoveshchenskii} describes how the {\sc sde} for $X^\eps_t$ should be linearized in the vicinity of the deterministic solution $x(t)$ to obtain the effective fluctuation process $Z_t$ approximating $\frac{X^\eps_t-x(t)}{\eps}$.
The novelty, then, of our calculations is to consider a \textit{general} resetting map $h$ and carefully quantify the combined effect of alternating flow ({\sc sde}) and resetting (via the map $h$) on the evolution of the error process $X^\eps_t - x(t)$, thereby extending the results of \cite{Blagoveshchenskii}.
Our main finding is that, in addition to linearizing the {\sc sde} as per the recipe by Blagoveshchenskii \textit{in between} impulses, one should also linearize the resetting map \textit{at} impulses to obtain the limiting fluctuation process $Z_t$. 
It is worth noting that, in addition to providing simpler linear approximations to the original nonlinear system, our results---or at the very least, similar linearization techniques---may be adaptable to treat problems of optimal control or uncertainty quantification in nonlinear stochastic settings, as exemplified by \cite{JCS_OptimalSpacecraftGuidance}, \cite{BlakeMacleanBalasuriya}. 


To better situate our work within the context of the existing literature, we start by noting that {\sc sie} form a class of nonsmooth dynamical systems \cite{Filippov}, \cite{BristolBook} with \textit{hybrid} dynamics; by the latter we mean a system whose evolution involves the close interplay between continuous and discrete dynamics \cite{GST-HybDynSys-book}. 
Within {\sc sie}, one can distinguish between problems with time-dependent versus state-dependent impulses.
For the first of these categories, the impulse times are known \textit{a priori}, or at any rate, are independent of the state.
This includes, in addition to the present work,  \cite{WY_CMP2003}, \cite{LinYoung_JFPTA}, \cite{DDD}, \cite{Balasuriya_Nonlinearity}, \cite{JCS_OptimalSpacecraftGuidance} where the impulses, whether originating due to kicks or control action, arrive in a time-periodic manner.
The second category includes cases where impulses, whether due to impact or control action, occur when the state hits a suitable switching manifold, e.g., \cite{GAP}, \cite{VeerRakeshPoulakakis}, \cite{KantMukherjee_SysConLett2020}, \cite{KantMukherjeeKhalil_NonlinDyn2021}, \cite{LHJ_Walking}. 
For systems with impacts/impulse effects subject to stochastic perturbations, questions of averaging, bifurcations, steady-state behavior have also been explored; see, for instance, \cite{DI}, \cite{RounakGupta}, \cite{SNP}, \cite{Zhu_PoissonImpulses}, and the references therein.

The rest of the paper is organized as follows. In Section \ref{S:ProblemStatement}, we precisely formulate our problem of interest and state our main results, viz., Theorems \ref{T:LLN} and \ref{T:CLT}. 
Section \ref{S:LLN} is devoted to the proof of Theorem \ref{T:LLN}, while Section \ref{S:CLT} tackles the proof of Theorem \ref{T:CLT}. Finally, in Section \ref{S:Example}, we numerically illustrate our results on a simple example: the periodically kicked nonlinear pendulum with nonlinear resetting map. 

\subsection*{Some comments regarding notation}
We close out this section by listing some frequently used notation and recording a few notational conventions. 
For \( x \in \mathbb{R}^n \), $n \ge 1$, let \( \|x\| \triangleq \left( \sum_{i=1}^{n} |x_i|^2 \right)^{1/2} \) denote the standard Euclidean norm on $\BR^n$, and for $A=[a_{ij}] \in \BR^{m\times n}$, let $\|A\|$ and $\|A\|_\sfF$ be, respectively, the corresponding induced matrix norm and the Frobenius norm, i.e., 
$\|A\| \triangleq \max_{\substack{x \in \BR^n\\x \neq 0}} \frac{\|Ax\|}{\|x\|}$ and $\|A\|_\sfF\triangleq\sqrt{\sum_{i=1}^{m}\sum_{j=1}^n|a_{ij}|^2}=\mathsf{Trace}(A^{\intercal}A)$.
Since 
$\|A\| \le \|A\|_\sfF$, we have 
$\|Ax\| \le \|A\| \|x\| \le \|A\|_\sfF \|x\|$ for every $x \in \BR^n$.
Note that both the matrix norms $\|\cdot\|$ and $\|\cdot\|_\sfF$ are \textit{submultiplicative}, i.e., $\|AB\| \le \|A\| \|B\|$, $\|AB\|_\sfF \le \|A\|_\sfF \|B\|_\sfF$ for all matrices $A$, $B$ such that the products are well-defined. 
The indicator function of the set $A$ will be denoted by $\ind_A$.
For real numbers $a,b$, the maximum and minimum of $a$ and $b$ will be denoted by $a \vee b$ and $a \wedge b$, respectively.
As a matter of convention, we will denote generic constants in various estimates by $C$, allowing the exact value of $C$ to  change from line to line. 
Further, with the sole exception of $\eps$, we will absorb the dependence on \textit{all} problem parameters into $C$. 
Thus, a typical $C$ will depend on all problem parameters (including time horizon $\sfT$), but \textit{not} on $\eps$. 
Finally, we mention a comment regarding multiplication of matrices using product notation. 
Let $A_1,A_2,\dots$ be $d \times d$ matrices. For $1 \le I \le J$, we will always understand $\prod_{k=I}^J A_k$ to be obtained by taking successive terms (as $k$ increases) on the left while multiplying matrices; thus, 
\begin{equation}\label{E:matrix-mult-conv}
\prod_{k=I}^J A_k \triangleq A_J \medspace A_{J-1} \dots A_{I+1} \medspace A_I; \qquad \text{and we let} \qquad \sum_{i=1}^0 (\dots) \triangleq \bm 0, \quad \text{and} \quad \prod_{j=1}^0 (\dots) \triangleq I_d
\end{equation}
where $\bm 0$ and $I_d$ denote the $d \times d$ zero and identity matrices respectively.


\section{Problem Formulation and Statement of Main Results}\label{S:ProblemStatement}

To describe our {\sc sie} of interest, we start by fixing a smooth vector field $b:\BR^d \to \BR^d$ and a smooth resetting map $h:\BR^d \to \BR^d$; precise conditions on $b$, $h$ will be stated in Assumptions \ref{A:vf} and \ref{A:resetting-map} below. 
The state $x(t) \in \BR^d$ of our system of interest will be assumed to evolve according to the {\sc ode} $\dot{x}(t)=b(x(t))$ between impulses, with resetting according to $x \mapsto h(x)$ at impulses, with the latter arriving in a time-periodic manner.
We will assume, without loss of generality, that the impulses arrive at times $\alpha, 1+\alpha, 2+\alpha, \dots$ for some fixed $\alpha \in (0,1]$. 
Thus, we define a sequence $\mathscr{T}$ of times by 
\begin{equation}\label{E:impulse-times}
\mathscr{T} \triangleq \{t_k\}_{k=0}^\infty, \quad \text{where} \quad t_0 \triangleq 0, \quad t_k=k-1+\alpha \quad \text{for $k \in \BN$, with $\alpha \in (0,1]$ fixed.}
\end{equation}
Owing to the impulses, the state $x(t)$ will almost inevitably\footnote{The only exception is at fixed points of the mapping $h$.} have jumps at the times $\{t_k\}_{k=1}^\infty$. 
We will assume that the state $x:[0,\infty) \to \BR^d$ 
is a right-continuous function with left limits; we set
\begin{linenomath}
\begin{equation*}
x^+(t)\triangleq \lim_{s \searrow t}x(s) \quad \text{for all $t \ge 0$}, \quad x^-(t)\triangleq \lim_{s \nearrow t}x(s) \quad \text{for all $t>0$}, \quad x^-(0) \triangleq x(0).
\end{equation*}
\end{linenomath}
Equipped with this notation, our system of interest can be described more precisely by the equations 
\begin{linenomath}
\begin{equation}\label{E:sie-det-per}
\begin{aligned}		\dot{x}(t) &= b(x(t)) \qquad \quad \text{for $t \in [0,\infty)\setminus\mathscr{T}$}\\x^+(t_k) &= h(x^-(t_k)) \qquad \text{for $k \ge 1$,} \qquad x(0)=x_0 \in \BR^d.
\end{aligned}
\end{equation}
\end{linenomath}
The notation and formulation above closely follow \cite{GAP}.
We note that systems of the form \eqref{E:sie-det-per} provide the precise mathematical formulation for periodically kicked systems, i.e., {\sc ode} driven by a periodic train of delta functions; see Remark \ref{R:pks}.

If the system \eqref{E:sie-det-per} above is subject to small state-dependent Brownian perturbations of size $\eps$ with $0<\eps \ll 1$, then the evolution should be governed by a stochastic process $X^\eps_t$ whose paths are right-continuous with left limits, and solves
\begin{equation}\label{E:sie-stoch-per}
\begin{aligned}
dX^\eps_t &= b(X^\eps_t)\thinspace dt + \eps \sigma(X^\eps_t) \thinspace dW_t \qquad \text{for $t \in [0,\infty)\setminus\mathscr{T}$}\\X^{\eps,+}_{t_k} &= h(X^{\eps,-}_{t_k}) \qquad \qquad \qquad \medspace \qquad \text{for $k \ge 1$,} \qquad X^\eps_0=x_0 \in \BR^d,
\end{aligned}
\end{equation}
where $W_t$ is an $r$-dimensional Brownian motion on some probability space $(\Omega,\filt,\BP)$ and the function $\sigma:\BR^d \to \BR^{d\times r}$ is assumed to be sufficiently regular.  
Once again, 
\begin{equation*}
X^{\eps,+}_t \triangleq \lim_{s \searrow t}X^\eps_s \quad \text{for $t \ge 0$}, \quad X^{\eps,-}_t \triangleq \lim_{s \nearrow t}X^\eps_s \quad \text{for $t>0$}, \quad  X^{\eps,-}_0 \triangleq X^\eps_0
\end{equation*}
are the right- and left-continuous modifications of $X^\eps_t$, defined path-by-path in the manner described above.	
We now make precise the regularity assumptions on the drift $b$, the dispersion matrix $\sigma$, and the resetting map $h$.
	
\begin{assumption}[Regularity of drift and diffusion coefficients]\label{A:vf}
The function $b:\BR^d \to \BR^d$ is $C^2$ with all components having globally bounded first- and second-order partial derivatives. For convenience, we set
\begin{equation}\label{E:K_b}
\sfK_b \triangleq \left(\sup_{x \in \BR^d} \max_{1 \le i,j \le d} \left|\frac{\partial b_i}{\partial x_j}(x)\right|\right) \vee \left(\sup_{x \in \BR^d} \max_{1 \le i,j,k \le d} \left|\frac{\partial^2 b_i}{\partial x_j \partial x_k}(x)\right|\right) \vee 1.
\end{equation}
The function $\sigma:\BR^d \to \BR^{d \times r}$ is globally Lipschitz with linear growth; thus, there exists $\sfK_\sigma>0$ such that
\begin{equation}\label{E:K_sigma}
\|\sigma(x)-\sigma(y)\|_\sfF \le \sfK_\sigma \|x-y\|, \qquad \|\sigma(x)\|_\sfF \le \sfK_\sigma \left(1+ \|x\|\right) \qquad \text{for all $x,y \in \BR^d$.}
\end{equation}
\end{assumption}

\begin{assumption}[Resetting map]\label{A:resetting-map}
The resetting map $h:\BR^d \to \BR^d$ is $C^2$ with all components having globally bounded first- and second-order partial derivatives. For convenience, we set
\begin{equation}\label{E:K_h}
\sfK_h \triangleq \left(\sup_{x \in \BR^d} \max_{1 \le i,j \le d} \left|\frac{\partial h_i}{\partial x_j}(x)\right|\right) \vee \left(\sup_{x \in \BR^d} \max_{1 \le i,j,k \le d} \left|\frac{\partial^2 h_i}{\partial x_j \partial x_k}(x)\right|\right) \vee 1.
\end{equation}
\end{assumption}

\begin{remark}\label{R:linear-ok}
It is worth observing that while Assumptions \ref{A:vf} and \ref{A:resetting-map} impose boundedness of derivatives of $b, h$ and a global Lipschitz condition for $\sigma$, the functions $b, h, \sigma$ themselves are allowed to grow linearly. This is important, since we would like our framework to accommodate linear {\sc sde} with affine resetting maps, given that resetting maps of the form $h(x)=Ax$ with suitable matrix $A$ are ubiquitous in impact problems. 
Also, we have ensured that $\sfK_b$ and $\sfK_h$ are greater than or equal to $1$ in order to simplify certain calculations.
\end{remark}

Going forward, we will find it helpful to characterize $x(t)$ solving \eqref{E:sie-det-per} and $X^\eps_t$ solving \eqref{E:sie-stoch-per} by the following integral equations with resetting:
\begin{equation}\label{E:det-state-int-eq}
x(t) = \sum_{k \ge 1} \ind_{[t_{k-1},t_k)}(t) \left\{x^+(t_{k-1}) + \int_{t_{k-1}}^t b(x(s)) \thinspace ds \right\},    \text{$x^+(t_{k})=h(x^-(t_{k}))$ for $k \ge 1$, $x^+(t_0)=x_0$,}
\end{equation}
\begin{multline}\label{E:stoch-state-int-eq}
X^\eps_t = \sum_{k \ge 1} \ind_{[t_{k-1},t_k)}(t) \left\{X^{\eps,+}_{t_{k-1}} + \int_{t_{k-1}}^t b(X^\eps_s) \thinspace ds  + \eps\int_{t_{k-1}}^t \sigma(X^\eps_s) \thinspace dW_s\right\},\\   \text{$X^{\eps,+}_{t_k}=h(X^{\eps,-}_{t_k})$ for $k \ge 1$, $X^{\eps,+}_{t_0}=x_0$.} 
\end{multline}

We next fix a time horizon $[0,\sfT]$ with $\sfT>0$ and note that $x(t)$ and the sample paths of the stochastic process $X^\eps_t$ belong to the space $D([0,\sfT];\BR^d)$ consisting of functions from $[0,\sfT]$ into $\BR^d$ which are right-continuous for all $t \ge 0$ with left limits for $t >0$. 
Our principal aim is to explore the asymptotic behavior of $X^\eps_t$ and its convergence to $x(t)$ in the limit as $\eps \searrow 0$ through the following questions:
\begin{itemize}
\item Do the trajectories of the stochastic process $X^\eps_t$ convergence in a suitable sense to $x(t)$ in the limit as $\eps \searrow 0$?
\item If yes, can one obtain a more refined approximation to $X^\eps_t$ of the form $x(t)+\eps Z_t$ where the fluctuation process $Z_t$ captures the effect of the noise to leading order?
\end{itemize}
Although these processes (including $Z_t$) have paths in $D([0,\sfT];\BR^d)$ which is often topologized using the Skorohod metric $d_\Sk$, we will find it convenient to compare quantities using the uniform metric $d_\sfu(x,y) \triangleq \sup_{t \in [0,\sfT]}\|x(t)-y(t)\|$; see Remark \ref{R:whysupnorm}. 
	
\begin{remark}\label{R:whysupnorm} 
The Skorohod metric on $D([0,\sfT];\BR^d)$, which renders the space complete and separable \cite{ConvProbMeas, EK86}, is obtained by modifying the uniform metric $d_\sfu$ to allow small time distortions in comparison of functions $x(t)$ and $y(t)$ in the following fashion. If we let $\Lambda_\sfT$ be the family of all strictly increasing continuous bijections $\lambda:[0,\sfT] \to [0,\sfT]$ for which
$\gamma_\sfT(\lambda) \triangleq \sup_{0 \le s <t \le \sfT} \left|\log \frac{\lambda(t)-\lambda(s)}{t-s}\right| <\infty$, then the Skorohod metric is defined by setting 
$d_{\Sk}(x,y) \triangleq \inf_{\lambda \in \Lambda_\sfT}\left\{ \gamma_\sfT(\lambda) \vee \sup_{0 \le t \le \sfT} \|x(t) - y(\lambda(t))\| \right\}$ for $x,y \in D([0,\sfT];\BR^d)$.\footnote{The quantity $\gamma_\sfT(\lambda)$ captures the deviation of the time distortion $\lambda(\cdot)$ from the identity. While both the metrics $d_\Sk$ and $d_\sfu$ defined above naturally depend on the time horizon $[0,\sfT]$, we have suppressed the $\sfT$-dependence in our notation in an attempt to reduce clutter.} 
Since the identity function $\itd(t) \equiv t$ belongs to $\Lambda_\sfT$, and we have $\gamma_\sfT(\itd)=0$, it follows that $d_{\Sk}(x,y) \le d_\sfu(x,y)$ for $x,y \in D([0,\sfT];\BR^d)$. 
It is the above structure of allowing small time distortions that allows one to conclude, for instance, that $\lim_{n \to \infty} d_{\Sk}(\ind_{\left[\frac{1}{2}+\frac{1}{n},1\right]},\ind_{\left[\frac{1}{2},1\right]}) = 0$ in $D([0,1];\BR)$.
However, when comparing paths $x(t)$ and $y(t)$ which have jumps at common times, the time distortions are of limited utility, thereby making the uniform metric $d_\sfu$ an easier tool to use, which additionally provides stronger control, as noted above.
\end{remark}

We now state our first result, whose proof will be provided in Section \ref{S:LLN}. 		
	
\begin{theorem}[Law of large numbers]\label{T:LLN} 
Let $x(t)$ and  $X^\varepsilon_t$ solve the systems $\eqref{E:sie-det-per}$ and $\eqref{E:sie-stoch-per}$ respectively, and fix $p \in \{1,2\}$. Then, for any fixed $\sfT>0$, there exists a constant  $C_{\ref{T:LLN}}(\sfT)>0$ 
such that
\begin{equation}{\label{E:LLN-rate}}
\mathbb{E}\left[\sup_{0 \le t \le \sfT} \|X^\eps_{t}-x(t)\|^p\right]\le C_{\ref{T:LLN}}(\sfT) \thinspace \eps^p \quad \quad \text{for $\eps \in (0,1)$.}
		\end{equation}
\end{theorem}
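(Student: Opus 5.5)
The plan is to prove the case $p=2$ and then deduce $p=1$ from Jensen's inequality, since $\BE[\sup\|\cdot\|] \le (\BE[\sup\|\cdot\|^2])^{1/2}$. Set $\Delta^\eps_t \triangleq X^\eps_t - x(t)$ and work interval by interval on $[t_{k-1},t_k)\cap[0,\sfT]$. The number of such intervals meeting $[0,\sfT]$ is $N \triangleq \lceil \sfT \rceil + 1$, which is finite and independent of $\eps$. The idea is to control the error on one interval in terms of the error just after the previous impulse, and then chain the estimates through the resetting map.

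\textbf{Step 1: a priori moment bound.} We first need $\BE[\sup_{t\le\sfT}\|X^\eps_t\|^2]\le C$ uniformly in $\eps\in(0,1)$. This follows by induction on $k$, using the standard linear-growth SDE estimate on each interval: Burkholder--Davis--Gundy together with Gronwall, and the linear growth of $b$ and $\sigma$ coming from Assumption \ref{A:vf}. At impulses we use $\|h(x)\|\le \|h(0)\|+C\sfK_h\|x\|$ to pass the second-moment bound from $X^{\eps,-}_{t_k}$ to $X^{\eps,+}_{t_k}$. The deterministic path $x(t)$ is bounded on $[0,\sfT]$ by the same reasoning.

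\textbf{Step 2: estimate on a single interval.} For $t\in[t_{k-1},t_k)$, subtracting \eqref{E:det-state-int-eq} from \eqref{E:stoch-state-int-eq} gives
\[
\Delta^\eps_t = \Delta^{\eps,+}_{t_{k-1}} + \int_{t_{k-1}}^t \bigl(b(X^\eps_s)-b(x(s))\bigr)\,ds + \eps\int_{t_{k-1}}^t \sigma(X^\eps_s)\,dW_s .
\]
Write $D_k(t)\triangleq \BE[\sup_{t_{k-1}\le s\le t}\|\Delta^\eps_s\|^2]$. The drift difference is handled by the Lipschitz bound $\|b(x)-b(y)\|\le C\sfK_b\|x-y\|$ and Cauchy--Schwarz. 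The stochastic integral is handled by Doob's (or BDG's) inequality together with Step 1, which give
\[
\eps^2\,\BE\Bigl[\sup_{s\le t}\Bigl\|\int \sigma\,dW\Bigr\|^2\Bigr]\le C\eps^2\int \BE\|\sigma(X^\eps_s)\|_\sfF^2\,ds\le C\eps^2 .
\]
Combining these, $D_k(t)\le 3\,\BE\|\Delta^{\eps,+}_{t_{k-1}}\|^2 + C\int_{t_{k-1}}^t D_k(s)\,ds + C\eps^2$. Gronwall's inequality then yields
\[
D_k \le C\bigl(\BE\|\Delta^{\eps,+}_{t_{k-1}}\|^2+\eps^2\bigr),
\]
and the same bound holds for the left limit $\BE\|\Delta^{\eps,-}_{t_k}\|^2$.

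\textbf{Step 3: passing through the impulse and iterating.} By the mean value theorem and Assumption \ref{A:resetting-map},
\[
\|\Delta^{\eps,+}_{t_k}\| = \|h(X^{\eps,-}_{t_k})-h(x^-(t_k))\|\le d\,\sfK_h\,\|\Delta^{\eps,-}_{t_k}\| .
\]
Hence $a_k\triangleq\BE\|\Delta^{\eps,+}_{t_k}\|^2$ satisfies $a_k\le C(a_{k-1}+\eps^2)$ with $a_0=0$. Iterating this recursion $N$ times gives $a_k\le C\eps^2$ for all $k\le N$. Finally, $\sup_{[0,\sfT]}\|\Delta^\eps\|^2\le\sum_{k\le N}\sup_{[t_{k-1},t_k)}\|\Delta^\eps\|^2$, so summing the $N$ interval bounds gives \eqref{E:LLN-rate} for $p=2$.

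\textbf{Main obstacle.} I do not expect a deep difficulty here; the main care point is bookkeeping. The constants grow geometrically, like $(C\sfK_h)^N$, but they remain independent of $\eps$, which is all that is needed. One must also make sure that the moment bound of Step 1 survives the resets, which relies on the linear growth of $h$.
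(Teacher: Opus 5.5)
Your proof is correct, but it is organized differently from the paper's.

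\textbf{What the paper does.} It never localizes to a single inter-impulse interval. It first derives, in Lemma \ref{L:LLN-path-diff}, a global pathwise representation of $X^\eps_t-x(t)$. In that representation, the drift and noise contributions from each earlier interval are carried forward by products $\prod_j Dh(\bm\eta^{j,\eps})$ of Jacobians, evaluated at row-dependent mean-value points. It bounds these products crudely by $(\sfK_h d)^{\sfN}$, which gives the pathwise integral inequality of Lemmas \ref{L:LLN-comparison}--\ref{L:LLN-comparison-squared} on all of $[0,\sfT]$ at once. It then closes with a single Gronwall argument after taking expectations, using Doob, the It\^o isometry and the a priori bound of Lemma \ref{L:stoch-traj-UB}. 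That a priori bound is itself proved through an analogous global representation, not through an interval-wise induction like your Step 1.

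\textbf{What you do.} You apply Gronwall in expectation separately on each interval. You pass the second moment through each reset using only $\|h(y)-h(x)\|\le d\sfK_h\|y-x\|$, and you close with the scalar recursion $a_k\le C(a_{k-1}+\eps^2)$, $a_0=0$.

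\textbf{What each buys.} Your route is more elementary and modular. It needs only Lipschitz continuity of $b$ and $h$, with no mean-value matrices and no bookkeeping of matrix products. The paper's product-of-Jacobians identity costs more effort, but it is the template for the fluctuation analysis. Lemma \ref{L:CLT-path-diff} is the same telescoping identity with $Dh(\bm\eta^{j,\eps})$ replaced by $Dh(x^-(t_j))$ plus second-order Taylor remainders, which makes explicit how the linearized resetting map enters $Z_t$.

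Your recursion would adapt to the CLT just as well. At a reset one has
\[
\bigl\|h(X^{\eps,-}_{t_k})-h(x^-(t_k))-\eps Dh(x^-(t_k))Z^-_{t_k}\bigr\|\le d\sfK_h\bigl\|X^{\eps,-}_{t_k}-x^-(t_k)-\eps Z^-_{t_k}\bigr\|+C\bigl\|X^{\eps,-}_{t_k}-x^-(t_k)\bigr\|^2 .
\]
So the difference between the two routes is one of presentation rather than strength.

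One minor point: $\lceil\sfT\rceil+1$ is only an upper bound on the number of intervals meeting $[0,\sfT]$. That upper bound is all your argument uses.
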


Informally, Theorem \ref{T:LLN} implies that for any fixed time horizon $\sfT>0$, we have $X^\eps_t=x(t) + \mathscr{O}(\eps)$ uniformly for $t \in [0,\sfT]$ as $\eps \searrow 0$. 
It is natural to ask whether one can identify any of the higher order terms in an expansion of $X^\varepsilon_t$ in powers of the small parameter $\varepsilon$. 
Thus, we would like to find a stochastic process $Z_t$, independent of $\varepsilon$, such that $X^\varepsilon_t=x(t)+\varepsilon Z_t+\mathscr{O}(\varepsilon^2)$, together with rigorous estimates on the error $\BE\left[\sup_{t \in [0,\sfT]}\|X^\eps_t - x(t) - \eps Z_t\| \right]$.
	
To this end, let $Z_t$ be the process solving 
\begin{equation}\label{E:fluct-proc}
\begin{aligned}
Z_t &\triangleq \sum_{k \ge 1} \ind_{[t_{k-1},t_k)}(t) \left\{Z^{+}_{t_{k-1}} + \int_{t_{k-1}}^t Db(x(s)) Z_s \thinspace ds + \int_{t_{k-1}}^t \sigma(x(s)) \thinspace dW_s \right\}, \\
Z^{+}_{t_k}&=Dh(x^-(t_k)) Z^{-}_{t_k} \quad \text{for $k \ge 1$}, \qquad Z^{+}_0 \triangleq 0.
\end{aligned}
\end{equation}
Note that the process $Z_t$ solves the {\sc sde} $dZ_t = Db(x(t))Z_t \ dt+ \sigma(x(t)) \thinspace dW_t$ between impulses, and is governed by the resetting condition $Z^{+}_{t_k}=Dh(x^-(t_k)) Z^{-}_{t_k}$ at impulses.
Closely inspecting \eqref{E:fluct-proc}, we see that the process $Z_t$ is right-continuous at the impulse times $\{t_k\}_{k=1}^\infty$, and thus has sample paths in $D([0,\sfT];\BR^d)$.
We also observe that both the {\sc sde} and the resetting condition for $Z_t$ are non-autonomous (on account of dependence on $x(t)$) and involve \textit{linearizations} about the deterministic trajectory $x(t)$ of the (potentially nonlinear) vector field $b$ and resetting condition $x \mapsto h(x)$. 
	
%
	
We now state our main result concerning leading order behavior of the fluctuations of $X^\eps_t$ about $x(t)$.
	
\begin{theorem}[Central limit theorem]\label{T:CLT}
Let $x(t)$ and  $X^\varepsilon_t$ solve the systems $\eqref{E:sie-det-per}$ and $\eqref{E:sie-stoch-per}$ respectively, and let $Z_t$ be the stochastic process given by \eqref{E:fluct-proc}. Then, for any fixed $\sfT>0$, there exists a constant $C_{\ref{T:CLT}}(\sfT)>0$ such that 
\begin{equation}{\label{E:CLT-rate}}
\mathbb{E}\left[\sup_{0 \le t \le \sfT} \|X^\eps_{t}-x(t)-\eps Z_t\|\right]\le C_{\ref{T:CLT}}(\sfT) \thinspace \eps^2 \quad \quad \text{for $\eps \in (0,1)$.} 
\end{equation}
\end{theorem}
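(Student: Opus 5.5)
We will prove Theorem \ref{T:CLT} by an inductive argument over the impulse intervals $[t_{k-1},t_k)$, of which only finitely many, say $\NN = \lceil \sfT \rceil + 1$, meet $[0,\sfT]$. Set $R^\eps_t \triangleq X^\eps_t - x(t) - \eps Z_t$. On each interval we use the integral equations \eqref{E:stoch-state-int-eq}, \eqref{E:det-state-int-eq} and \eqref{E:fluct-proc}, and write
\begin{equation*}
R^\eps_t = R^{\eps,+}_{t_{k-1}} + \int_{t_{k-1}}^t Db(x(s))R^\eps_s\,ds + \int_{t_{k-1}}^t \bigl[b(X^\eps_s)-b(x(s))-Db(x(s))(X^\eps_s-x(s))\bigr]ds + \eps\int_{t_{k-1}}^t \bigl[\sigma(X^\eps_s)-\sigma(x(s))\bigr]dW_s .
\end{equation*}
The Taylor remainder in the drift is bounded by $C\sfK_b\|X^\eps_s-x(s)\|^2$, using Assumption \ref{A:vf}. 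The stochastic integral has, by Burkholder--Davis--Gundy and the Lipschitz bound \eqref{E:K_sigma}, expected supremum at most $C\eps\,\BE[\int\|X^\eps_s-x(s)\|^2ds]^{1/2}$. Theorem \ref{T:LLN} with $p=2$ (and $p=1$ where convenient) shows that both contributions are $\le C\eps^2$. Gronwall's inequality applied to $\BE[\sup_{t_{k-1}\le s\le t}\|R^\eps_s\|]$, with $\|Db\|$ bounded, then gives
\begin{equation*}
\BE\Bigl[\sup_{t\in[t_{k-1},t_k)}\|R^\eps_t\|\Bigr] \le C\bigl(\BE\|R^{\eps,+}_{t_{k-1}}\| + \eps^2\bigr),
\end{equation*}
and the same bound for the left limit $R^{\eps,-}_{t_k}$.

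At an impulse time we compute
\begin{equation*}
R^{\eps,+}_{t_k} = h(X^{\eps,-}_{t_k}) - h(x^-(t_k)) - Dh(x^-(t_k))(X^{\eps,-}_{t_k}-x^-(t_k)) + Dh(x^-(t_k))R^{\eps,-}_{t_k}.
\end{equation*}
By Assumption \ref{A:resetting-map} the first three terms together are bounded by $C\sfK_h\|X^{\eps,-}_{t_k}-x^-(t_k)\|^2$. Its expectation is $\le C\eps^2$, again by Theorem \ref{T:LLN} with $p=2$, since the supremum over $[0,\sfT]$ controls left limits. The last term is bounded by $C\sfK_h\|R^{\eps,-}_{t_k}\|$. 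Hence $\BE\|R^{\eps,+}_{t_k}\|\le C(\BE\|R^{\eps,-}_{t_k}\|+\eps^2)$.

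We start from $R^{\eps,+}_{t_0}=0$ and iterate the interval bound and the jump bound $\NN$ times. This yields $\BE\|R^{\eps,\pm}_{t_k}\|\le C^k\eps^2$ and $\BE[\sup_{[t_{k-1},t_k)}\|R^\eps\|] \le C\eps^2$. Summing over the $\NN$ intervals bounds the supremum over $[0,\sfT]$, which gives \eqref{E:CLT-rate}.

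The main obstacle is the stochastic integral term. BDG produces an $L^1$ bound of the supremum only via a square-root of a quadratic variation involving $\|X^\eps-x\|^2$, not $\|R^\eps\|$. It therefore cannot be absorbed into Gronwall, and it must be bounded directly by the $L^2$ estimate from Theorem \ref{T:LLN}; this is exactly where the case $p=2$ is needed.

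A second technical point is that the linearization step must be applied to the left limits $X^{\eps,-}_{t_k}$. We therefore need the interval estimates to extend to the closed interval via left limits, which follows from continuity of the integrals in $t$.
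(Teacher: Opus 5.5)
Your proposal is correct and follows essentially the same argument as the paper. You use the same linearization of $X^\eps_t-x(t)-\eps Z_t$ between and at impulses, control the quadratic Taylor remainders of $b$ and $h$ and the martingale term $\eps\int(\sigma(X^\eps_s)-\sigma(x(s)))\,dW_s$ at order $\eps^2$ via Theorem \ref{T:LLN} with $p=2$, and close with Gronwall. The only difference is bookkeeping: the paper unrolls the recursion over all impulses into the explicit decomposition of Lemma \ref{L:CLT-path-diff} and applies Gronwall once on $[0,\sfT]$, whereas you apply Gronwall on each inter-impulse interval and chain the finitely many jumps through the discrete bound $\BE\|R^{\eps,+}_{t_k}\|\le C(\BE\|R^{\eps,-}_{t_k}\|+\eps^2)$.
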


The proof of Theorem \ref{T:CLT} uses Theorem \ref{T:LLN}, and is provided in Section \ref{S:CLT}. 		
We next fix some notation which will help us keep track of the impulses. 

\begin{notation}
Let $\ren(t)$ denote the total number of impulses that have occurred up to (and including) time $t \in [0,\infty)$. It is easily checked that if we set
\begin{equation}\label{E:ren}
\ren(t) \triangleq \max\{n \in \BZ^+:t_n \le t\} \quad \text{for $t \ge 0$, then} \quad	\ren(t) = \lfloor t-\alpha \rfloor+1
\end{equation}
where $\lfloor \cdot \rfloor$ denotes the integer floor function. 
\end{notation}	

We will find it convenient to prove Theorems \ref{T:LLN} and \ref{T:CLT} for the case when the terminal time $\sfT$ is \textit{not} an impulse time, i.e.,   
\begin{equation}\label{E:TN}
\sfT \in (0,\infty)\setminus\{t_k\}_{k=1}^\infty, \qquad \text{and we let} \qquad \sfN \triangleq \ren(\sfT)
\end{equation}
denote the total number of impulses up to time $\sfT$. This assumption on $\sfT$ does not entail any loss of generality and the results can easily be extended to the case that $\sfT \in \{t_k\}_{k=1}^\infty$; see Remark \ref{R:finaltime} below. We also note that the analysis can be extended to the case of nonperiodic impulses so long as one has some minimum time between impulses.

\begin{remark}\label{R:finaltime}
Suppose one has been able to prove Theorems \ref{T:LLN} and \ref{T:CLT} for the case that $\sfT \in (0,\infty)\setminus\{t_k\}_{k=1}^\infty$. 
If one would now like to prove the theorems for $\sfT=t_K$ for some $K \in \BN$, then one can apply the conclusions of Theorems \ref{T:LLN} and \ref{T:CLT} on a slightly larger time horizon $[0,\tilde\sfT]$ with $\sfT < \tilde\sfT \in (0,\infty)\setminus\{t_k\}_{k=1}^\infty$.
The desired results now follow by noting the nondecreasing (as a function of time $t$) nature of the quantities $\sup_{s \in [0,t]}\|X^\eps_s-x(s)\|$ and $\sup_{s \in [0,t]}\|X^\eps_s-x(s)-\eps Z_s\|$. 
\end{remark}

\begin{remark}\label{R:pks} 
Systems with periodic impulse effects of the form \eqref{E:sie-det-per} are closely related to, and in fact provide a natural mathematical formulation for, differential equations forced by a periodic train of delta ``functions". 
To see this, let $g:\BR^d \to \BR^d$ be a smooth vector field and consider the periodically kicked system
\begin{equation}\label{E:kicked-system}
\dot{x}=b(x) + g(x) \sum_{n=1}^\infty \delta(t-t_n), \quad x(0)=x_0,
\end{equation}
where $t_n=n-1+\alpha$ for $n \ge 1$ with fixed $\alpha \in (0,1]$. 
Following \cite{WY_CMP2003}, we will say that a function $x:[0,\infty) \to \BR^d$ solves the initial value problem \eqref{E:kicked-system} if 
\begin{enumerate}[(i)]
\item $x(0)=x_0$,
\item $\dot{x}(t)=b(x(t))$ for $t \neq t_n$, $n \ge 1$, and 
\item $x^+(t_n)=h(x^-(t_n))$ for $n \ge 1$, where the resetting map $h:\BR^d \to \BR^d$ is defined by
\begin{multline}\label{E:kick}
h(r) \triangleq \lim_{\eps \searrow 0} \sfz^\eps(\eps;r) \quad \text{where $\sfz^\eps(t;r)$ solves the initial value problem} \\\dot{\sfz}^\eps(t;r) = \frac{1}{\eps} g(\sfz^\eps(t;r)) \quad \text{for $t>0$}, \quad \sfz^\eps(0;r)=r \in \BR^d.
\end{multline}
\end{enumerate}
The intuition behind the above definition involves approximating the train of delta functions by the ``regularization" $p^\eps(t):\BR^+ \to \BR^+$ given by
$p^\eps(t) \triangleq \frac{1}{\eps}\sum_{n \ge 1}\ind_{[t_n,t_n+\eps]}(t)$, 
 solving the regularized {\sc ode}, and then taking limits as the regularization parameter $\eps \searrow 0$. An important special case occurs when $g(x)=Ax+c$ where $A \in \BR^{d\times d}$ is a constant matrix, and $c \in \BR^d$ is a constant vector. It is easily seen that in this case, we have
$\sfz^\eps(t;r) = e^{tA/\eps}r + e^{tA/\eps}\left(\int_0^t \frac{1}{\eps}e^{-sA/\eps} \thinspace ds\right) c$ which yields $h(r)=e^A r + e^A \left(\int_0^1 e^{-sA} \thinspace ds\right)c$, 
by virtue of \eqref{E:kick}. In particular, if $A=0$ (i.e., $g(x)=c$), then $h(r)=r+c$ which corresponds, as expected, to translation by amount $c$.
\end{remark}

We close out this section by stating a multidimensional version of Taylor's formula with remainder which will be used repeatedly in Sections \ref{S:LLN} and \ref{S:CLT} to prove Theorems \ref{T:LLN} and \ref{T:CLT}, respectively. 
Our notation here follows \cite{apostol1974mathematical}. For \( x, y \in \mathbb{R}^d \), let 
$L(x, y) \triangleq \{sx + (1-s)y : 0 \leq s \leq 1\}$
denote the line segment joining \( x \) and \( y \).
Suppose we have a function \( f : \mathbb{R}^d \to \mathbb{R} \) whose first and second order partial derivatives exist. Given \( x, t \in \mathbb{R}^d \), we let \( f'(x; t) \), \( f''(x; t) \) denote the quantities
\[
f'(x; t) \triangleq \sum_{i=1}^{d} \frac{\partial f}{\partial x_i}(x)t_i = \langle \nabla f(x), t \rangle, \quad \text{and} \quad 
f''(x; t) \triangleq \sum_{i=1}^{d} \sum_{j=1}^{d} \frac{\partial^2 f}{\partial x_i \partial x_j}(x)t_i t_j = \langle D^2 f(x)t, t \rangle,
\]
where $D^2 f(x) \triangleq \left[\frac{\partial^2 f}{\partial x_i \partial x_j}\right]_{1 \le i,j \le d}$ denotes the Hessian matrix and $\la \cdot,\cdot \ra$ denotes the standard inner product in $\BR^d$.
The symbol \( f^{(m)}(x; t) \) can be defined in a similar way if all \( m \)-th order partial derivatives exist; thus, $f^{(3)}(x; t) \triangleq \sum_{i=1}^{d} \sum_{j=1}^{d} \sum_{k=1}^{d} \frac{\partial^3 f}{\partial x_i \partial x_j \partial x_k}(x) t_i t_j t_k$, assuming the existence of third order partial derivatives.

\noindent \begin{proposition}[Taylor's formula] \cite{apostol1974mathematical} Let \( S \) be an open subset of \( \mathbb{R}^d \), and suppose we have a function \( f : S \to \mathbb{R} \). Assume that \( f \) and all its partial derivatives of order less than \( m \) are differentiable at each point of \( S \). If \( x, y \) are two points in \( S \) such that \( L(x, y) \subseteq S \), then there is a point \( z \) on the line segment \( L(x, y) \) such that
	\[
	f(y) - f(x) = \sum_{k=1}^{m-1} \frac{1}{k!} f^{(k)}(x; y - x) + \frac{1}{m!} f^{(m)}(z; y - x).
	\]
\end{proposition}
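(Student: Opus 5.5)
The plan is to reduce the statement to the one-variable Taylor theorem with Lagrange remainder by restricting $f$ to the line through $x$ and $y$. Set $u \triangleq y-x$. Since $L(x,y)$ is compact and $S$ is open, $L(x,y)$ has positive distance from $\BR^d \setminus S$. Hence there is $\delta>0$ such that $x+su \in S$ for all $s \in (-\delta,1+\delta)$. On this interval I will define
\[
g(s) \triangleq f(x+su).
\]
The goal is to show that $g$ is $m$ times differentiable on $(-\delta,1+\delta)$, with
\[
g^{(k)}(s) = f^{(k)}(x+su;u), \qquad 0 \le k \le m .
\]

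The key step is to establish this formula by induction on $k$ using the chain rule. For $k=0$ it is the definition of $g$. Suppose the formula holds for some $k \le m-1$. Then $g^{(k)}(s)$ is a finite linear combination, with the constant coefficients $u_{i_1}\cdots u_{i_k}$, of the $k$-th order partials $\partial_{i_1}\cdots\partial_{i_k} f$ evaluated along the line $s \mapsto x+su$. By hypothesis each partial of order $k<m$ is differentiable (in the Fréchet sense) at every point of $S$. The chain rule therefore applies to each of these terms and gives
\[
\frac{d}{ds}\,\partial_{i_1}\cdots\partial_{i_k} f(x+su) = \sum_{j=1}^d \partial_j\partial_{i_1}\cdots\partial_{i_k} f(x+su)\, u_j .
\]
Summing these contributions yields exactly $f^{(k+1)}(x+su;u)$, which completes the induction.

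I expect this chain-rule step to be the main point needing care. It is precisely where the hypothesis of \emph{differentiability} of the partials of order $<m$, rather than their mere existence, is used. Mere existence of partial derivatives would not suffice to differentiate a composition with the curve $s\mapsto x+su$.

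It then follows that $g,g',\dots,g^{(m-1)}$ are differentiable on $(-\delta,1+\delta)$, so in particular they are continuous on $[0,1]$. The one-variable Taylor theorem with Lagrange remainder (proved via Rolle's theorem) therefore applies and gives some $\theta\in(0,1)$ with
\[
g(1)-g(0)=\sum_{k=1}^{m-1}\frac{1}{k!}g^{(k)}(0)+\frac{1}{m!}g^{(m)}(\theta).
\]
Finally I will substitute $g(1)=f(y)$, $g(0)=f(x)$, $g^{(k)}(0)=f^{(k)}(x;y-x)$ and $g^{(m)}(\theta)=f^{(m)}(z;y-x)$, where $z\triangleq x+\theta(y-x)$. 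This $z$ is the point $\theta y + (1-\theta)x$, which lies on $L(x,y)$, and the claimed formula follows.
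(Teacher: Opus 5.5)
Your proposal is correct and follows the standard argument. The paper gives no proof of its own and simply cites Apostol, whose proof is the same as yours: use openness of $S$ to put the segment inside an open interval $(-\delta,1+\delta)$, show $g^{(k)}(s)=f^{(k)}(x+s(y-x);y-x)$ by repeated use of the chain rule (which needs differentiability of the partials of order $<m$), and apply the one-variable Taylor formula with Lagrange remainder to $g$.
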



	\section{Limiting Mean Behavior}\label{S:LLN}
	In the present section, we provide the proof of Theorem \ref{T:LLN}, breaking the arguments into a series of propositions.	
	In the sequel, we will apply Taylor's formula to each component of the vector-valued functions $b,h:\BR^d \to \BR^d$. Writing this in vector notation will necessitate working with the Jacobian matrices $Db(\cdot)$, $Dh(\cdot)$, where the different rows may involve evaluating the gradient of that particular component at different spatial locations. To ease this process, we introduce the following notation.
	
	\begin{definition}\label{D:tf} 
		Let $f:\BR^d \to \BR^d$ be smooth. For $\bm \xi = (\xi_1,\xi_2,\dots,\xi_d)$ with $\xi_i \in \BR^d$ for $1 \le i \le d$, let $Df(\bm \xi)$ denote the $d \times d$ matrix whose $i$-th row is $\nabla f_i(\xi_i)$ for $1 \le i \le d$.
	\end{definition}

	

Recall the convention regarding matrix multiplication using the product notation as described in \eqref{E:matrix-mult-conv}.	
	\begin{lemma}\label{L:LLN-path-diff}
		For $\eps\in(0,1)$, $t\in[0, \sfT]$, and $\ren(t)$ as in \eqref{E:ren}, the solutions to equations \eqref{E:det-state-int-eq} and \eqref{E:stoch-state-int-eq} satisfy the relation
		\begin{equation}\label{E:R-error-ren-G}
			\begin{aligned}
				X^\eps_{t}-x(t) &= \prod_{j=1}^{\ren(t)}Dh(\bm\eta^{j,\eps})\left\{X^{\eps,+}_{0}-x^+(0)\right\} + \LL^\eps(t) + \MM^\eps(t), \quad \text{where} \\  
				\LL^\eps(t) &\triangleq  \sum_{i=1}^{\ren(t)} \left({\prod_{j=i}^{\ren(t)}} Dh(\bm\eta^{j,\eps})\right) \int_{t_{i-1}}^{t_i} Db(\bm\xi^{\eps}_s) \left(X^\eps_{s}-x(s)\right) \thinspace ds + \int_{t_{\ren(t)}}^t Db(\bm\xi^{\eps}_s)\left(X^\eps_{s}-x(s)\right) \thinspace ds\\
				\MM^\eps(t) &\triangleq  \eps \sum_{i=1}^{\ren(t)} \left({\prod_{j=i}^{\ren(t)}} Dh(\bm\eta^{j,\eps})\right) \int_{t_{i-1}}^{t_i} \sigma(X^\eps_s)\thinspace dW_s	+ \eps\int_{t_{\ren(t)}}^{t} \sigma(X^\eps_s)\thinspace dW_s
			\end{aligned}
		\end{equation}
		where $\bm\eta^{k,\varepsilon}=(\eta^{k,\varepsilon,1},\dots,\eta^{k,\varepsilon,d})$ with each $\eta^{k,\varepsilon,i}$ being a point between $x^-(t_{k})$ and $X^{\varepsilon,-}_{t_k}$; $\bm\xi^{\varepsilon}_t=(\xi^{\varepsilon,1}_t,\dots,\xi^{\varepsilon,d}_t)$ with each $\xi^{\varepsilon,i}_t$ being a point between $x(t)$ and $X^\varepsilon_{t}$, for $t\in [0,\infty)\setminus\{t_j\}_{j=1}^\infty$. 
	\end{lemma}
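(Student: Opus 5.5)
I would prove the representation \eqref{E:R-error-ren-G} by tracking the error $e^\eps_t \triangleq X^\eps_t - x(t)$ one inter-impulse interval at a time, and then unrolling the resulting recursion by induction on the number of impulses $\ren(t)$.

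\emph{Step 1: evolution between impulses.} Fix $k \ge 1$ and $t \in [t_{k-1},t_k)$. Subtracting \eqref{E:det-state-int-eq} from \eqref{E:stoch-state-int-eq} gives
\[
e^\eps_t = \{X^{\eps,+}_{t_{k-1}} - x^+(t_{k-1})\} + \int_{t_{k-1}}^t \{b(X^\eps_s)-b(x(s))\}\,ds + \eps\int_{t_{k-1}}^t \sigma(X^\eps_s)\,dW_s .
\]
Taylor's formula with $m=1$ is the mean value theorem. Applied to each component $b_i$ on the segment $L(x(s),X^\eps_s)$, it gives a point $\xi^{\eps,i}_s$ with $b_i(X^\eps_s)-b_i(x(s)) = \la \nabla b_i(\xi^{\eps,i}_s), e^\eps_s\ra$. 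In the notation of Definition \ref{D:tf}, this reads $b(X^\eps_s)-b(x(s)) = Db(\bm\xi^\eps_s)e^\eps_s$.

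There is a measurability concern here. To make the $ds$-integral meaningful, I would not select the points pointwise. Instead I would write
\[
Db(\bm\xi^\eps_s)e^\eps_s \;\triangleq\; \int_0^1 Db\bigl(x(s)+\theta e^\eps_s\bigr)\,d\theta\; e^\eps_s ,
\]
which is jointly measurable and bounded by $\sfK_b$ entrywise. One can remark that this coincides with a row-wise mean-value representation, and this is the natural reading of the statement. Letting $t \nearrow t_k$ and using continuity of the integrals yields the same identity for $e^{\eps,-}_{t_k} \triangleq X^{\eps,-}_{t_k}-x^-(t_k)$, with the integrals taken over $[t_{k-1},t_k]$.

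\emph{Step 2: the jump.} At $t_k$, apply the mean value theorem componentwise to $h$ between $x^-(t_k)$ and $X^{\eps,-}_{t_k}$. This gives
\[
X^{\eps,+}_{t_k}-x^+(t_k) = h(X^{\eps,-}_{t_k})-h(x^-(t_k)) = Dh(\bm\eta^{k,\eps})\, e^{\eps,-}_{t_k}.
\]
Combining with Step 1 yields the one-step recursion
\[
e^{\eps,+}_{t_k} = Dh(\bm\eta^{k,\eps})\Bigl\{e^{\eps,+}_{t_{k-1}} + \int_{t_{k-1}}^{t_k} Db(\bm\xi^\eps_s)e^\eps_s\,ds + \eps\int_{t_{k-1}}^{t_k}\sigma(X^\eps_s)\,dW_s\Bigr\}.
\]

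\emph{Step 3: induction.} Unrolling this linear recursion from $k=1$ to $k=\ren(t)$ gives
\[
e^{\eps,+}_{t_{\ren(t)}} = \prod_{j=1}^{\ren(t)}Dh(\bm\eta^{j,\eps})\, e^{\eps,+}_0 + \sum_{i=1}^{\ren(t)}\Bigl(\prod_{j=i}^{\ren(t)}Dh(\bm\eta^{j,\eps})\Bigr)\{\text{drift}_i+\text{noise}_i\},
\]
where $\text{drift}_i$ and $\text{noise}_i$ denote the two integrals over $[t_{i-1},t_i]$. This is verified inductively using the ordering convention \eqref{E:matrix-mult-conv}, since left-multiplying $\prod_{j=i}^{k-1}$ by $Dh(\bm\eta^{k,\eps})$ gives $\prod_{j=i}^{k}$. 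Finally, apply Step 1 on the last partial interval $[t_{\ren(t)},t]$; since $t<t_{\ren(t)+1}$ by \eqref{E:ren}, no further impulse occurs there. This adds the trailing $ds$ and $dW_s$ integrals and yields exactly $\LL^\eps(t)$ and $\MM^\eps(t)$.

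\emph{Edge case.} The case $\ren(t)=0$ is covered by the conventions that the empty sum is $\bm 0$ and the empty product is $I_d$.

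The main obstacle is purely bookkeeping. One must match the index ordering of the matrix products with \eqref{E:matrix-mult-conv}, and handle the measurable choice of the intermediate points $\bm\xi^\eps_s$ as described in Step 1. Otherwise the argument is a routine telescoping.
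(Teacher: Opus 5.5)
Your proof is correct and follows essentially the same route as the paper. Both arguments apply a componentwise mean-value (first-order Taylor) expansion to $b$ between impulses and to $h$ at impulses, apply the two identities alternately forward in time, and regroup, exactly as in the derivation of \eqref{E:R-error-G}. Your measurability remark is a harmless addition that the paper does not make: only the product $Db(\bm\xi^\eps_s)(X^\eps_s-x(s)) = b(X^\eps_s)-b(x(s))$ enters the identity, so it is measurable however the intermediate points are chosen.
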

	
	\begin{proof}[Proof of Lemma \ref{L:LLN-path-diff}]
		Note that $[0,\sfT]$ can be written as the disjoint union $\left(\cup_{k=1}^\sfN [t_{k-1},t_k)\right)\cup [t_\sfN,\sfT]$. It easily follows from \eqref{E:stoch-state-int-eq} that for $t \in [0,\sfT]$, we have
		\begin{multline}\label{E:R-time-changed}
			X^\eps_t= \sum_{k=1}^{\sfN} \ind_{[t_{k-1},t_k)}(t)\left\{X^{\eps,+}_{t_{k-1}} + \int_{t_{k-1}}^t b(X^\eps_u) \thinspace du +\eps\int_{t_{k-1}}^t \sigma(X^\eps_u) \thinspace dW_u \right\}\\
			+ \ind_{[t_{\sfN},\sfT]}(t)\left\{X^{\eps,+}_{t_{\sfN}} + \int_{t_{\sfN}}^t b(X^\eps_u) \thinspace du +\eps\int_{t_{\sfN}}^t \sigma(X^\eps_u) \thinspace dW_u \right\}.
		\end{multline}	
		\noindent
		Now suppose $t \in [t_{k-1}, t_{k})$ for some $1 \le k \le \sfN$ or $t \in [t_{k-1}, \sfT]$ with $k=\sfN+1$. Then, we see from equations \eqref{E:det-state-int-eq} and \eqref{E:R-time-changed}  that $X^{\varepsilon}_t - x(t)
		= X^{\varepsilon,+}_{t_{k-1}} -x^+(t_{k-1}) + \int_{t_{k-1}}^{t}  b(X^{\varepsilon}_s) \ ds - \int_{t_{k-1}}^{t} b(x(s)) \ ds + \varepsilon \int_{t_{k-1}}^{t}\sigma(X^\eps_s)\ dW_s$. Recalling the notation of Definition \ref{D:tf} and using Taylor's formula, we get 
		\begin{multline}\label{E:R-error-k}
			X^{\varepsilon}_t - x(t) = \left\{X^{\varepsilon,+}_{t_{k-1}} - x^+(t_{k-1})\right\} + \II^\eps_k(t), \qquad \text{where} \\
			\II^\eps_k(t) \triangleq \int_{t_{k-1}}^{t} Db(\bm\xi^{\varepsilon}_s)\left(X^{\varepsilon}_{s}-x(s)\right) \ ds + \varepsilon\int_{t_{k-1}}^{t}  \sigma(X^\eps_s) \thinspace dW_s 
		\end{multline}
		where $\bm\xi^{\varepsilon}_s=(\xi^{\varepsilon,1}_s, \dots, \xi^{\varepsilon,d}_s)$ with each $\xi^{\varepsilon,i}_s$ being a point between $x(s)$ and $X^\varepsilon_{s}$. As $t \nearrow t_k$, $1 \le k \le \sfN$, the expression above approaches $X^{\eps,-}_{t_k}-x^-(t_k)$. Recalling the resetting rule in the last lines of \eqref{E:det-state-int-eq} and \eqref{E:stoch-state-int-eq}, we use Taylor's formula to get 
		\begin{equation}\label{E:R-resetting-k} 
			X^{\varepsilon,+}_{t_k} - x^+(t_{k})=h(X^{\varepsilon,-}_{t_k})-h(x^-(t_{k}))=Dh(\bm\eta^{k,\varepsilon})(X^{\varepsilon,-}_{t_k}-x^-(t_{k})), 
		\end{equation} 
		where $\bm\eta^{k,\varepsilon}=(\eta^{k,\varepsilon,1}, \dots, \eta^{k,\varepsilon,d})$ with each $\eta^{k,\varepsilon,i}$ being a point between $x^-(t_{k})$ and $X^{\varepsilon,-}_{t_k}$. Starting from the interval $[t_0,t_1)$ and working forward in time, successive alternate application of \eqref{E:R-error-k} followed by \eqref{E:R-resetting-k} yields that for $t \in [t_{k-1},t_k)$ with $1 \le k \le \sfN$, or $t \in [t_{k-1},\sfT]$ with $k=\sfN+1$, we have
		\begin{equation*}
			X^\eps_{t}-x(t)= {\prod_{j=1}^{k-1}}Dh(\bm\eta^{j,\eps})\left\{X^{\eps,+}_{t_0}-x^+(t_0)\right\} + \sum_{i=1}^{k-1} \left({\prod_{j=i}^{k-1}} Dh(\bm\eta^{j,\eps})\right) \II^\eps_i(t_i) + \II^\eps_k(t).
		\end{equation*}
		
		Unwrapping this expression and suitably regrouping the terms, we see that for $t \in [t_{k-1},t_k)$ with $1 \le k \le \sfN$ or $t \in [t_{k-1},\sfT]$ with $k=\sfN+1$, we have
		\begin{equation}\label{E:R-error-G}
			\begin{aligned}
				X^\eps_{t}-x(t) &= {\prod_{j=1}^{k-1}}Dh(\bm\eta^{j,\eps})\left\{X^{\eps,+}_{t_0}-x^+(t_0)\right\} + \LL^\eps_k(t) + \MM^\eps_k(t), \quad \text{where} \\  
				\LL^\eps_k(t) &\triangleq  \sum_{i=1}^{k-1} \left({\prod_{j=i}^{k-1}} Dh(\bm\eta^{j,\eps})\right) \int_{t_{i-1}}^{t_i} Db(\bm\xi^{\eps}_s) \left(X^\eps_{s}-x(s)\right) \thinspace ds + \int_{t_{k-1}}^t Db(\bm\xi^{\eps}_s)\left(X^\eps_{s}-x(s)\right) \thinspace ds\\
				\MM^\eps_k(t) &\triangleq  \eps \sum_{i=1}^{k-1} \left({\prod_{j=i}^{k-1}} Dh(\bm\eta^{j,\eps})\right) \int_{t_{i-1}}^{t_i} \sigma(X^\eps_s)\thinspace dW_s
				+ \eps\int_{t_{k-1}}^{t} \sigma(X^\eps_s)\thinspace dW_s.
			\end{aligned}
		\end{equation}
		Observing that $\ren(t)=k-1$ in equation \eqref{E:R-error-G}, we get the stated result.
	\end{proof}


We will frequently use the fact, easily verified by the triangle inequality, that for any $0 \le s_1 \le s_2 \le t$, we have
\begin{equation}\label{E:stoch-int-est}
\left\|\int_{s_1}^{s_2} \sigma(X^\eps_u) \thinspace dW_u \right\| \le 2 \sup_{0 \le s \le t} \left\|\int_{0}^{s} \sigma(X^\eps_u) \thinspace dW_u \right\|.
\end{equation}	
	
\begin{lemma}\label{L:LLN-comparison}
There exists a constant $C_{\ref{L:LLN-comparison}}>0$ such that for all $\eps \in (0,1)$, $t\in[0, \sfT]$, we have		 
\begin{equation*}
\sup_{0 \le s \le t}\|X^\eps_s - x(s)\| \le C_{\ref{L:LLN-comparison}} \left\{ \int_0^t \sup_{0 \le u \le s}\|X^\eps_{u}-x(u)\| \thinspace ds + \eps \sup_{0 \le s \le t} \left\|\int_{0}^{s} \sigma(X^\eps_u) \thinspace dW_u \right\|\right\}.
\end{equation*}
\end{lemma}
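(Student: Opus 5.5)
The plan is to take the pathwise representation of Lemma \ref{L:LLN-path-diff} and bound each of its three terms pathwise, using only the boundedness of the Jacobians from Assumptions \ref{A:vf} and \ref{A:resetting-map}. No probabilistic estimate is needed yet: the stochastic integral stays as its running supremum.

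First, the initial term vanishes, since $X^{\eps,+}_0 = x^+(0) = x_0$. Next, since every entry of $Dh(\bm\eta)$ is bounded by $\sfK_h$, we have $\|Dh(\bm\eta)\| \le \|Dh(\bm\eta)\|_\sfF \le d\,\sfK_h$, and likewise $\|Db(\bm\xi)\| \le d\,\sfK_b$. This holds regardless of where the rows are evaluated, which is exactly why Definition \ref{D:tf} causes no trouble. For $s \le t \le \sfT$ we have $\ren(s) \le \sfN$, and $d\sfK_h \ge 1$. By submultiplicativity, every product $\prod_{j=i}^{\ren(s)} Dh(\bm\eta^{j,\eps})$ then has norm at most $(d\sfK_h)^{\sfN}$, which is a constant independent of $\eps$ and $s$.

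Fix $s \in [0,t]$ and bound $\LL^\eps(s)$. Each integral $\int_{t_{i-1}}^{t_i}$, together with the final piece $\int_{t_{\ren(s)}}^{s}$, is bounded by $d\sfK_b \int \|X^\eps_u - x(u)\|\,du$ over the corresponding interval. These intervals are disjoint subintervals of $[0,s]$, so summing gives
\[
\|\LL^\eps(s)\| \le (d\sfK_h)^{\sfN} d\sfK_b \int_0^s \|X^\eps_u-x(u)\|\,du \le (d\sfK_h)^{\sfN} d\sfK_b \int_0^t \sup_{0\le v\le u}\|X^\eps_v-x(v)\|\,du .
\]
For $\MM^\eps(s)$, each of the at most $\sfN+1$ stochastic integrals over $[t_{i-1},t_i]$ or $[t_{\ren(s)},s]$ (all inside $[0,t]$) is bounded via \eqref{E:stoch-int-est} by $2\sup_{0\le v\le t}\|\int_0^v\sigma(X^\eps_u)\,dW_u\|$. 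This gives
\[
\|\MM^\eps(s)\| \le 2(\sfN+1)(d\sfK_h)^{\sfN}\,\eps \sup_{0\le v\le t}\Bigl\|\int_0^v \sigma(X^\eps_u)\,dW_u\Bigr\| .
\]
The stochastic integrals here are interpreted pathwise on a fixed version, which is legitimate because \eqref{E:stoch-int-est} is a deterministic triangle-inequality statement.

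Adding the two bounds and taking the supremum over $s\in[0,t]$ gives the claim with
\[
C_{\ref{L:LLN-comparison}} = 2(\sfN+1)(d\sfK_h)^{\sfN}d\sfK_b .
\]
This constant depends on $\sfT$ through $\sfN$, which the paper's convention permits. The main point of care is bookkeeping rather than a genuine obstacle. The constant must be uniform in $s\le t$ even though the number of impulses $\ren(s)$ varies with $s$; this is handled by bounding everything by the worst case $\sfN$, using $d\sfK_h\ge 1$. One must also be sure that the intermediate points $\bm\eta^{j,\eps}$, $\bm\xi^\eps_s$ play no role beyond entering the global derivative bounds.
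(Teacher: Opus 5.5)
Your proposal is correct and follows the same route as the paper. You bound the terms of the decomposition in Lemma \ref{L:LLN-path-diff} pathwise using $\|Dh\|_\sfF\le d\sfK_h$, $\|Db\|_\sfF\le d\sfK_b$, $\ren(s)\le\sfN$ and \eqref{E:stoch-int-est}, then take the supremum over $s\in[0,t]$, and your explicit constant $2(\sfN+1)(d\sfK_h)^{\sfN}d\sfK_b$ simply makes the paper's $C_1, C_2$ concrete (your bound on $\MM^\eps$ also correctly keeps the factor $\eps$ on every stochastic-integral term).
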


\begin{proof}[Proof of Lemma \ref{L:LLN-comparison}]
We now carefully estimate the terms on the right hand side of the first line in \eqref{E:R-error-ren-G}. The first term is zero on account of the initial conditions. It is easily checked, using Assumptions \ref{A:vf} and \ref{A:resetting-map}, that $\|Db(\bm\xi^\eps_s)\|_\sfF \le \sfK_b d$, $\|Dh(\bm\eta^{j,\eps})\|_\sfF \le \sfK_h d$. We now have
\begin{equation*}
\|\LL^\eps(t)\| \le \sum_{i=1}^{\ren(t)} \left(\prod_{j=i}^{\ren(t)} \|Dh(\bm\eta^{j,\eps})\|_\sfF\right) \int_{t_{i-1}}^{t_i} \|Db(\bm\xi^\eps_s)\|_\sfF \|X^\eps_s-x(s)\|\thinspace ds + \int_{t_{\ren(t)}}^t \|Db(\bm\xi^\eps_s)\|_\sfF \|X^\eps_s-x(s)\|\thinspace ds.
\end{equation*}
Thus, $\|\LL^\eps(t)\| \le \sum_{i=1}^{\ren(t)} (\sfK_h d)^{\ren(t)} (\sfK_b d)\int_{t_{i-1}}^{t_i} \sup_{0 \le u \le s}\|X^\eps_u-x(u)\| \thinspace ds + \sfK_b d \int_{t_{\ren(t)}}^t \sup_{0 \le u \le s}\|X^\eps_u-x(u)\| \thinspace ds$. Noting that $\ren(t)$ is nondecreasing in $t$ with $\ren(\sfT)=\sfN<\infty$, there exists a constant $C_1>0$ such that 
\begin{equation}\label{E:L-estimate-LLN}
\|\LL^\eps(t)\| \le C_1 \int_0^t \sup_{0 \le u \le s} \|X^\eps_u-x(u)\| \thinspace ds \quad \text{for all $t \in [0,\sfT]$.}
\end{equation}
Turning to $\MM^\eps(t)$, we note that
\begin{equation*}
\|\MM^\eps(t)\| \le \eps \sum_{i=1}^{\ren(t)} \left(\prod_{j=i}^{\ren(t)} \|Dh(\bm\eta^{j,\eps})\|_\sfF\right) \left\|\int_{t_{i-1}}^{t_i} \sigma(X^\eps_s) \thinspace dW_s\right\| + \left\|\int_{t_{\ren(t)}}^t \sigma(X^\eps_s) \thinspace dW_s\right\|.
\end{equation*}
Recalling \eqref{E:stoch-int-est} and reasoning as above, there exists a constant $C_2 > 0$ such that
\begin{equation}\label{E:M-estimate-LLN}
\|\MM^\eps(t)\| \le \eps C_2 \sup_{0 \le s \le t} \left\|\int_{0}^{s} \sigma(X^\eps_u) \thinspace dW_u \right\|.
\end{equation}
Using \eqref{E:L-estimate-LLN} in conjunction with equations \eqref{E:L-estimate-LLN} and \eqref{E:M-estimate-LLN}, and noting that the right-hand sides for the latter two equations are nondecreasing in $t$, we easily get the stated result.
\end{proof}

%
%
%
%
%
%

The next lemma allows us to estimate $\sup_{0 \le s \le t}\|X^\eps_s - x(s)\|^2$ and will be used both in the proof of Theorem \ref{T:LLN} and \ref{T:CLT}.

\begin{lemma}\label{L:LLN-comparison-squared} 
Fix $\sfT>0$. There exists a constant $C_{\ref{L:LLN-comparison-squared}}>0$ such that for all $\eps \in (0,1)$, $t\in[0, \sfT]$, we have 
\begin{equation}\label{E:LLN-comparison-squared}
\sup_{0 \le s \le t}\|X^\eps_s - x(s)\|^2
		\le  C_{\ref{L:LLN-comparison-squared}} \left\{\int_0^t \sup_{0 \le u \le s}\|X^\eps_{u}-x(u)\|^2 \thinspace ds + \eps^2 \sup_{0\le s\le t}\left\|\int_{0}^{s} \sigma(X^\eps_u)\thinspace dW_u\right\|^2\right\}.
\end{equation} 
\end{lemma}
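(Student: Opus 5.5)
The natural route is to square the pathwise estimate of Lemma \ref{L:LLN-comparison} and then use Cauchy--Schwarz on the time integral. Write $\phi(s) \triangleq \sup_{0 \le u \le s}\|X^\eps_u - x(u)\|$, which is nondecreasing in $s$. Lemma \ref{L:LLN-comparison} gives, for all $t \in [0,\sfT]$,
\begin{equation*}
\phi(t) \le C_{\ref{L:LLN-comparison}} \left\{ \int_0^t \phi(s)\thinspace ds + \eps \sup_{0 \le s \le t}\left\|\int_0^s \sigma(X^\eps_u)\thinspace dW_u\right\| \right\}.
\end{equation*}

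Since the right-hand side is nondecreasing in $t$, we may replace $\phi(t)$ on the left by $\sup_{0\le s\le t}\phi(s)$, which is $\phi(t)$ itself. This means the left side of \eqref{E:LLN-comparison-squared} is exactly $\phi(t)^2$.

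Next we square, using $(a+b)^2 \le 2a^2 + 2b^2$, to obtain
\begin{equation*}
\phi(t)^2 \le 2C_{\ref{L:LLN-comparison}}^2\left(\int_0^t \phi(s)\thinspace ds\right)^2 + 2C_{\ref{L:LLN-comparison}}^2\eps^2\sup_{0\le s\le t}\left\|\int_0^s\sigma(X^\eps_u)\thinspace dW_u\right\|^2 .
\end{equation*}
Here the supremum of the squared norm equals the square of the supremum, since $y\mapsto y^2$ is increasing on $[0,\infty)$.

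The only substantive step is to bound the squared time integral. By Cauchy--Schwarz,
\begin{equation*}
\left(\int_0^t \phi(s)\thinspace ds\right)^2 \le t\int_0^t \phi(s)^2\thinspace ds \le \sfT\int_0^t \phi(s)^2\thinspace ds .
\end{equation*}
Combining the two displays gives \eqref{E:LLN-comparison-squared} with $C_{\ref{L:LLN-comparison-squared}} \triangleq 2C_{\ref{L:LLN-comparison}}^2(\sfT \vee 1)$. This constant depends on $\sfT$, consistent with the convention that $C$ absorbs all problem parameters except $\eps$.

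No step here is a genuine obstacle. The one point needing care is that $\phi$ is measurable and finite, so that the integrals make sense pathwise. This holds because the paths of $X^\eps$ and $x$ lie in $D([0,\sfT];\BR^d)$, so $\phi$ is a monotone, finite function of $s$. If one prefers not to rely on Lemma \ref{L:LLN-comparison}, the same argument can be run directly on the decomposition of Lemma \ref{L:LLN-path-diff}: bound $\|\LL^\eps(t)\|^2$ via Cauchy--Schwarz as above, and bound $\|\MM^\eps(t)\|^2$ via \eqref{E:stoch-int-est} together with the bounds $\|Dh\|_\sfF\le \sfK_h d$ and $\sfN<\infty$.
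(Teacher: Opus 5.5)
Your proposal is correct and follows the paper's proof essentially verbatim. You square the estimate of Lemma \ref{L:LLN-comparison} using $(a+b)^2\le 2(a^2+b^2)$, and you bound $\bigl(\int_0^t \phi(s)\,ds\bigr)^2 \le t\int_0^t \phi(s)^2\,ds$ by H\"older/Cauchy--Schwarz, which yields the explicit (and valid) constant $2C_{\ref{L:LLN-comparison}}^2(\sfT\vee 1)$.
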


\begin{proof}[Proof of Lemma \ref{L:LLN-comparison-squared}]
Noting that for each $\omega \in \Omega$, $s \mapsto \sup_{0 \le u \le s}\|X^\eps_u-x(u)\|$ is square-integrable on $[0,t]$ with respect to Lebesgue measure, an application of Holder's inequality (upon squaring) yields\\ $\left(\int_0^t \sup_{0 \le u \le s}\|X^\eps_u-x(u)\| \thinspace ds\right)^2 \le t \int_0^t \sup_{0 \le u \le s}\|X^\eps_u-x(u)\|^2 \thinspace ds$. Using the inequality $(a+b)^2 \le 2(a^2+b^2)$ for all $a,b \in \BR$, the stated claim now follows from Lemma \ref{L:LLN-comparison}.
\end{proof}

We next estimate the mean contribution of the stochastic integrals.

\begin{lemma}\label{L:Diff_coeff}
For $\eps \in (0,1)$, $t\in[0, \sfT]$,  we have
\begin{equation}\label{E:Diff_coeff}
\BE\left[\sup_{0 \le s \le t} \left\|\int_0^s \sigma(X^\eps_u) \thinspace dW_u\right\|\right] \le \left\{\BE\left[\sup_{0 \le s \le t} \left\|\int_0^s \sigma(X^\eps_u) \thinspace dW_u\right\|^2\right]\right\}^{1/2}  \le 2r \thinspace \left\{\mathbb{E}\left[\int_{0}^{t} \|\sigma(X^\eps_s)\|^2_\sfF\thinspace ds\right]\right\}^{1/2}.
\end{equation}
\end{lemma}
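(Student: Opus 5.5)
The first inequality in \eqref{E:Diff_coeff} is Jensen's inequality (equivalently Cauchy--Schwarz), $\BE[Y] \le (\BE[Y^2])^{1/2}$, applied to the nonnegative random variable $Y = \sup_{0 \le s \le t}\|\int_0^s \sigma(X^\eps_u)\thinspace dW_u\|$. The substance is the second inequality, which I would obtain by working componentwise and applying Doob's $L^2$ maximal inequality to scalar martingales.

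Write $M_s \triangleq \int_0^s \sigma(X^\eps_u)\thinspace dW_u$. Its $i$-th component is
\begin{equation*}
M^i_s = \sum_{k=1}^r M^{i,k}_s, \qquad M^{i,k}_s \triangleq \int_0^s \sigma_{ik}(X^\eps_u)\thinspace dW^k_u .
\end{equation*}
Each $M^{i,k}$ is a square-integrable continuous martingale on $[0,t]$, provided $\BE\int_0^t \|\sigma(X^\eps_u)\|_\sfF^2\,du<\infty$. This follows from the linear growth bound \eqref{E:K_sigma} together with a finite second moment of $\sup_{s\le t}\|X^\eps_s\|$. That moment bound can be obtained interval by interval from \eqref{E:stoch-state-int-eq}, using standard SDE moment estimates and the linear growth of $h$, a consequence of Assumption \ref{A:resetting-map}. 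If the right-hand side of \eqref{E:Diff_coeff} were infinite, there would be nothing to prove anyway, so one could also use localization.

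The key estimates go as follows.
\begin{enumerate}[(i)]
\item Pointwise, $\sup_s\|M_s\|^2 \le \sum_{i=1}^d \sup_s |M^i_s|^2$.
\item By Cauchy--Schwarz over $k$, $|M^i_s|^2 \le r\sum_{k=1}^r |M^{i,k}_s|^2$, so $\sup_s |M^i_s|^2 \le r \sum_k \sup_s |M^{i,k}_s|^2$.
\item Doob's inequality gives $\BE[\sup_{s\le t}|M^{i,k}_s|^2] \le 4\,\BE|M^{i,k}_t|^2$.
\item The It\^o isometry gives $\BE|M^{i,k}_t|^2 = \BE\int_0^t \sigma_{ik}(X^\eps_u)^2\,du$.
\end{enumerate}
Summing over $i$ and $k$ yields
\begin{equation*}
\BE\Bigl[\sup_{s\le t}\|M_s\|^2\Bigr] \le 4r\,\BE\int_0^t \|\sigma(X^\eps_u)\|_\sfF^2\,du \le 4r^2\,\BE\int_0^t \|\sigma(X^\eps_u)\|_\sfF^2\,du,
\end{equation*}
where the last step uses $r\ge 1$. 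Taking square roots gives the stated constant $2r$. (Applying Doob directly to the vector submartingale $\|M_s\|$ together with the multidimensional It\^o isometry would even give the constant $2$; the cruder componentwise route simply matches the constant in the statement.)

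One subtlety deserves attention. Because of the resetting, $X^\eps$ has jumps at the times $t_k$, but it is adapted and right-continuous. The integrand $\sigma(X^\eps_u)$ differs from its predictable version $\sigma(X^{\eps,-}_u)$ only at countably many times, a set of Lebesgue measure zero, so the It\^o integral and the isometry are unaffected. The main obstacle is therefore only this integrability and measurability bookkeeping; I would handle it by localization with stopping times $\tau_n$ and monotone convergence rather than by proving the full moment bound.
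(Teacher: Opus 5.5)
Your proposal is correct and follows the same route as the paper: you split $\int_0^s \sigma(X^\eps_u)\,dW_u$ componentwise, apply Doob's $L^2$ maximal inequality to each scalar martingale and then the It\^o isometry, and obtain the first inequality from monotonicity of $L^p$ norms (Jensen). Two small differences: your Cauchy--Schwarz step gives the sharper factor $4r$ where the paper has $4r^2$. Your integrability remark (the inequality is trivial if the right-hand side is infinite, otherwise localize) makes explicit a point the paper leaves implicit.
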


\begin{proof}[Proof of Lemma \ref{L:Diff_coeff}]
The first inequality in \eqref{E:Diff_coeff} follows from monotonicity of $L^p$ norms on a probability space. To estimate the quantity $\mathbb{E}\left[\sup_{0\le s \le t} \left\|\int_{0}^{s}\sigma(X^\eps_u)\thinspace dW_u\right\|^2\right]$, we start by noting that
$\left\|\int_{0}^{s}\sigma(X^\eps_u)\thinspace dW_u\right\|^2 = \sum_{i=1}^d \left(\sum_{j=1}^r \int_0^s \sigma_{ij}(X^\eps_u) \thinspace dW^j_u\right)^2 \le  \sum_{i=1}^d \left(\sum_{j=1}^r \left|\int_0^s \sigma_{ij}(X^\eps_u) \thinspace dW^j_u\right|\right)^2 
\le r^2 \sum_{i=1}^d \sum_{j=1}^r \left|\int_0^s \sigma_{ij}(X^\eps_u) \thinspace dW^j_u\right|^2$. Taking suprema over $[0,t]$, we get
\begin{equation}\label{E:sup-est}
\sup_{0\le s \le t} \left\|\int_{0}^{s}\sigma(X^\eps_u)\thinspace dW_u\right\|^2 \le r^2 \sum_{i=1}^d \sum_{j=1}^r \sup_{0 \le s \le t} \left|\int_0^s \sigma_{ij}(X^\eps_u) \thinspace dW^j_u\right|^2.
\end{equation}
By Doob's maximal inequality applied to the continuous submartingale $\left|\int_{0}^{s} \sigma_{ij}(X^\eps_u)\thinspace dW^j_u\right|$, we have\\ $\mathbb{E}\left[ \sup_{0\le s \le t }\left|\int_{0}^{s} \sigma_{ij}(X^\eps_u)\thinspace dW^j_u\right|^2\right]\le 4 \mathbb{E}\left(\left|\int_{0}^{t} \sigma_{ij}(X^\eps_s) \thinspace dW^j_s\right|^2\right)$. Taking expectations in \eqref{E:sup-est} and using the Ito isometry, we get
\begin{equation*}
\BE\left[\sup_{0\le s \le t} \left\|\int_{0}^{s}\sigma(X^\eps_u)\thinspace dW_u\right\|^2\right] \le 4r^2 \sum_{i=1}^d \sum_{j=1}^r\BE\left[\int_0^t  \sigma_{ij}(X^\eps_s)^2 \thinspace ds\right]=4r^2 \BE\left[\int_0^t \|\sigma(X^\eps_s)\|_\sfF^2 \thinspace ds\right],
\end{equation*}
which immediately yields the stated result.
\end{proof}

Next we wish to present an upper bound of $\mathbb{E}\left(\int_0^t \|\sigma(X^\eps_s)\|^2_F\thinspace ds\right)$ and $\mathbb{E}\left(\int_0^t |X^\eps_s|^2\thinspace ds\right)$ , which will be instrumental in the next proposition.

\begin{lemma}\label{L:stoch-traj-UB}
There exists a constant $C_{\ref{L:stoch-traj-UB}}>0$ such that for $\eps\in(0, 1)$ and $t\in[0, \sfT]$, we have
\begin{equation}
\mathbb{E}\left(\int_{0}^{t}\sup_{0\le u\le s}\|X^\eps_u\|^2 \thinspace ds \right)  
\le C_{\ref{L:stoch-traj-UB}} e^{C_{\ref{L:stoch-traj-UB}}t}, \quad \text{and} \quad 
\mathbb{E}\left(\int_0^t \|\sigma(X^\eps_s)\|^2_\sfF\thinspace ds\right)\le C_{\ref{L:stoch-traj-UB}} e^{C_{\ref{L:stoch-traj-UB}} t}.
\end{equation}
\end{lemma}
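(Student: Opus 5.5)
The plan is to derive a Gronwall-type inequality for $\phi(t) \triangleq \BE\left[\sup_{0 \le u \le t}\|X^\eps_u\|^2\right]$, or more conveniently for its time integral, by mimicking the interval-by-interval unwrapping of Lemma \ref{L:LLN-path-diff}. The unwrapping is applied to $X^\eps$ itself rather than to the difference $X^\eps - x$. The second bound will then follow from the first via the linear growth condition \eqref{E:K_sigma}:
\[
\|\sigma(X^\eps_s)\|_\sfF^2 \le 2\sfK_\sigma^2(1+\|X^\eps_s\|^2),
\]
so that $\BE\int_0^t \|\sigma(X^\eps_s)\|^2_\sfF\,ds \le 2\sfK_\sigma^2 t + 2\sfK_\sigma^2 \BE\int_0^t \sup_{u\le s}\|X^\eps_u\|^2\,ds$.

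For the pathwise step, I use that $b$ and $h$ have bounded derivatives (Assumptions \ref{A:vf}, \ref{A:resetting-map}), hence grow at most linearly: $\|b(x)\| \le \|b(0)\| + \sfK_b d\|x\|$ and $\|h(x)\| \le \|h(0)\| + \sfK_h d\|x\|$. For $t \in [t_{k-1},t_k)$, \eqref{E:stoch-state-int-eq} gives
\[
\|X^\eps_t\| \le \|X^{\eps,+}_{t_{k-1}}\| + \int_{t_{k-1}}^t \|b(X^\eps_s)\|\,ds + \eps\left\|\int_{t_{k-1}}^t\sigma(X^\eps_s)\,dW_s\right\|,
\]
while at impulses $\|X^{\eps,+}_{t_k}\| \le \|h(0)\| + \sfK_h d\|X^{\eps,-}_{t_k}\|$. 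Iterating over the at most $\sfN$ impulses, with each amplification factor $\sfK_h d \ge 1$, and using \eqref{E:stoch-int-est} yields
\[
\sup_{u\le t}\|X^\eps_u\| \le C\Big\{1+\|x_0\| + \int_0^t \sup_{u \le s}\|X^\eps_u\|\,ds + \sup_{s\le t}\Big\|\int_0^s \sigma(X^\eps_u)\,dW_u\Big\|\Big\},
\]
where $\eps<1$ has been dropped. Squaring and applying H\"older's inequality as in Lemma \ref{L:LLN-comparison-squared}, then taking expectations and using Lemma \ref{L:Diff_coeff} together with the linear growth of $\sigma$, gives
\[
\phi(t) \le C\Big(1 + \int_0^t \phi(s)\,ds\Big).
\]
Gronwall's inequality then gives $\phi(t) \le Ce^{Ct}$. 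Integrating in $t$ bounds $\BE\int_0^t\sup_{u\le s}\|X^\eps_u\|^2\,ds$ by $Cte^{Ct} \le C'e^{C't}$ (Fubini/Tonelli permits exchanging $\BE$ and $\int$). Enlarging the constant then gives both stated estimates with a single $C_{\ref{L:stoch-traj-UB}}$.

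The main obstacle is justifying Gronwall, which requires $\phi$ to be finite a priori. I would handle this by localization. Introduce the stopping times $\tau_R \triangleq \inf\{t: \|X^\eps_t\| \ge R\}$, run the above argument for the stopped process (the stochastic integral up to $t\wedge\tau_R$ keeps Lemma \ref{L:Diff_coeff} valid), and obtain bounds uniform in $R$. Letting $R\to\infty$ and applying Fatou's lemma then concludes the argument. Jumps at impulses could push the process beyond $R$, but this causes no trouble: the post-jump size is at most $\|h(0)\|+\sfK_h d R$, which is still finite, so the stopped quantities remain bounded.
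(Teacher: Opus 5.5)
Your proposal is correct and follows essentially the same route as the paper. The paper writes $h(X^{\eps,-}_{t_k}) = h(0) + Dh(\bm\zeta^{k,\eps})X^{\eps,-}_{t_k}$ and unwraps an exact identity before taking norms, which is equivalent to your iterated linear-growth bound, and then squares, applies Lemma \ref{L:Diff_coeff} and Gronwall as you do; your localization via $\tau_R$ additionally supplies the a priori finiteness of $\BE[\sup_{u\le t}\|X^\eps_u\|^2]$ needed for Gronwall, a point the paper leaves implicit.
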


\begin{proof}[Proof of Lemma \ref{L:stoch-traj-UB}]
For the concatenated path traced by $X^\eps_t$, we start by obtaining a decomposition which resembles that in \eqref{E:R-error-ren-G}. The calculations proceed along very similar lines, except that at each impulse time $t_k$, $k \ge 1$, Taylor's formula takes the form 
\begin{equation*}
X^{\eps,+}_{t_k} = h(X^{\eps,-}_{t_k}) = h(0) + Dh(\bm \zeta^{k,\eps})X^{\eps,-}_{t_k} \qquad \text{where $\bm \zeta^{k,\eps} = (\zeta^{k,\eps,1}, \dots, \zeta^{k,\eps,d})$}
\end{equation*} 
with $\zeta^{k,\eps,i}$, $1 \le i \le d$, being a point between $0$ and $X^{\eps,-}_{t_k}$. This yields
\begin{multline}\label{E:X-B}
X^\eps_t =  \sum_{i=1}^{\ren(t)}\left({\prod_{j=i+1}^{\ren(t)}} Dh(\bm \zeta^{j,\eps})\right)h(0) + \left({\prod_{j=1}^{\ren(t)}} Dh(\bm \zeta^{j,\eps})\right)X^{\eps,+}_{t_0} \\
+ \sum_{i=1}^{\ren(t)} \left({\prod_{j=i}^{\ren(t)}} Dh(\bm \zeta^{j,\eps})\right) \int_{t_{i-1}}^{t_i} b(X^\eps_s) \thinspace ds 
+ \int_{t_{\ren(t)}}^t b(X^\eps_s) \thinspace ds \\
+ \eps \sum_{i=1}^{\ren(t)} \left({\prod_{j=i}^{\ren(t)}} Dh(\bm \zeta^{j,\eps})\right) \int_{t_{i-1}}^{t_i} \sigma(X^\eps_s)\thinspace dW_s
+ \eps\int_{t_{\ren(t)}}^{t} \sigma(X^\eps_s)\thinspace dW_s.
\end{multline}
As noted earlier, Assumptions \ref{A:vf} and \ref{A:resetting-map} imply that $\|Dh(\bm\zeta^{j,\eps})\|_\sfF \le \sfK_h d$. Since $\ren(t)$ is non-decreasing in $t$ with $\ren(\sfT)=\sfN<\infty$, a simple use of the triangle inequality in \eqref{E:X-B} yields
\begin{multline}\label{E:X-BB}
\|X^\eps_t\|\le \sfN (\sfK_h d)^{\sfN} \left\{ \|h(0)\| + \|x_0\| + \int_0^t \|b(X^\eps_s)\| \thinspace ds \right.\\
\left.+ \eps \sum_{i=1}^{\ren(t)} \left\|\int_{t_{i-1}}^{t_i} \sigma(X^\eps_s) \thinspace dW_s \right\| + \eps \left\|\int_{t_{\ren(t)}}^t \sigma(X^\eps_s) \thinspace dW_s\right\| \right\}.
\end{multline}	
Squaring \eqref{E:X-BB} and using H\"older's inequality, it is easily seen that there exists a constant $\tilde{C}>0$ such that for all $t \in [0,\sfT]$, we have
\begin{multline*}
\|X^\eps_t\|^2 \le \tilde C\left\{\|h(0)\|^2 + \|x_0\|^2 + \int_0^t \|b(X^\eps_s)\|^2 \thinspace ds + \eps^2 \sum_{i=1}^{\ren(t)} \left\|\int_{t_{i-1}}^{t_i} \sigma(X^\eps_s) \thinspace dW_s\right\|^2 \right.\\ 
\left.+ \eps^2 \left\|\int_{t_{\ren(t)}}^t \sigma(X^\eps_s) \thinspace dW_s\right\|^2\right\}\\
\le \tilde C\left\{\|h(0)\|^2 + \|x_0\|^2 + \int_0^t \|b(X^\eps_s)\|^2 \thinspace ds + 4\eps^2 (\ren(\sfT)+1) \sup_{0 \le s \le t}\left\|\int_{0}^s \sigma(X^\eps_u) \thinspace dW_u\right\|^2 \right\},
\end{multline*}
where the latter follows from \eqref{E:stoch-int-est}. Note that by Assumptions \ref{A:vf} and \ref{A:resetting-map}, there exists a constant $K>0$ such that $\|b(x)\|^2 +\|\sigma(x)\|_\sfF^2 \le K (1+\|x\|^2)$ for all $x\in \mathbb{R}^d$. Redefining $\tilde C$, we easily get 
\begin{equation}\label{E:X-squared}
\sup_{0 \le s \le t} \|X^\eps_s\|^2 \le \tilde C\left\{1 + \|x_0\|^2 + \int_0^t \sup_{0 \le u \le s}\|X^\eps_u\|^2 \thinspace ds + \eps^2 \sup_{0 \le s \le t}\left\|\int_{0}^s \sigma(X^\eps_u) \thinspace dW_u\right\|^2 \right\}.
\end{equation}
Taking expectations in \eqref{E:X-squared}, recalling Lemma \ref{L:Diff_coeff}, and using the aforementioned linear growth of $\sigma$, we easily see that there exists a constant $C>0$ (depending on $\|x_0\|$) such that 
\begin{equation*}
\BE\left[\sup_{0 \le s \le t} \|X^\eps_s\|^2\right] \le C\left\{1 + \int_0^t \BE\left[\sup_{0 \le u \le s}\|X^\eps_u\|^2\right] \thinspace ds\right\}  
\end{equation*}	
Gronwall's inequality implies that for all $t \in [0,\sfT]$, $\eps \in (0,1)$, we have
\begin{equation*}
\BE\left[\sup_{0 \le s \le t} \|X^\eps_s\|^2\right] \le Ce^{Ct} \quad \text{and upon integrating,} \quad \BE\left[\int_0^t \sup_{0 \le u \le s}\|X^\eps_u\|^2 \thinspace ds \right] \le e^{Ct},
\end{equation*}
thereby proving the first claim in equation \eqref{L:stoch-traj-UB}. A simple calculation using the linear growth of $\sigma$ easily yields the second claim in \eqref{L:stoch-traj-UB}.
\end{proof}

We now provide the proof of Theorem \ref{T:LLN}.

\begin{proof}[Proof of Theorem \ref{T:LLN}]
Consider the case $p=2$. Taking expectations in \eqref{E:LLN-comparison-squared}, using the second inequality in \eqref{E:Diff_coeff} to estimate $\BE\left[\sup_{0 \le s \le t}\|\int_0^s \sigma(X^\eps_u)\thinspace dW_u\|^2\right]$ in terms of $\BE\left[\int_0^t \|\sigma(X^\eps_s)\|_\sfF^2 \thinspace ds\right]$, and estimating the latter using Lemma \ref{L:stoch-traj-UB}, we see that there exists a constant $C>0$ such that
\begin{equation*}
\BE\left[\sup_{0 \le s \le t}\|X^\eps_s-x(s)\|^2\right] \le C\left\{\eps^2 + \int_0^t \BE\left[\sup_{0 \le u \le s}\|X^\eps_u-x(u)\|^2\right] \thinspace ds\right\} \quad \text{for all $t \in [0,\sfT]$, $\eps \in (0,1)$.}
\end{equation*}
Gronwall's inequality now yields $\BE\left[\sup_{0 \le s \le t}\|X^\eps_s-x(s)\|^2\right] \le \eps^2 C e^{C\sfT}$, thereby proving \eqref{E:LLN-rate} for the case $p=2$. By monotonicity of $L^p$ norms on a probability space, we have\\ $\BE\left[\sup_{0 \le s \le t}\|X^\eps_s-x(s)\|\right] \le \left(\BE\left[\sup_{0 \le s \le t}\|X^\eps_s-x(s)\|\right]\right)^{1/2}$ which immediately yields \eqref{E:LLN-rate} for the case $p=1$.
\end{proof}



\section{Analysis of Fluctuations}\label{S:CLT}
Our main goal in this section is to prove Theorem \ref{T:CLT}. 
Since we intend to apply the Taylor formula to vector-valued functions $f:\BR^d \to \BR^d$, we will find the following notation useful. Suppose $\bm \xi = (\xi_1,\dots,\xi_d)$ where each $\xi_i \in \BR^d$ for $1 \le i \le d$. For $x \in \BR^d$, let $\la D^2 f(\bm \xi)x,x \ra$ denote the column vector in $\BR^d$ with $i$-th component $\la D^2 f_i(\xi_i)x,x \ra$, i.e., 
\begin{equation}\label{E:D-squared}
\la D^2 f(\bm \xi)x,x \ra \triangleq \left(\la D^2 f_1(\xi_1)x,x \ra, \la D^2 f_2(\xi_2)x,x \ra,\dots,\la D^2 f_d(\xi_d)x,x \ra\right)^\intercal
\end{equation}


The next lemma gives a comparison between the path traced by $X^\eps$ and $x+\eps Z$. 


\begin{lemma}\label{L:CLT-path-diff}
For any $\eps\in(0,1)$ and for $t\in[0, \sfT]$, $\ren(t)$ defined as above, the solutions to equations \eqref{E:det-state-int-eq}, \eqref{E:stoch-state-int-eq}, and \eqref{E:fluct-proc} satisfy the following relation
\begin{multline}\label{E:CLT-decomp-detailed}
X^\eps_{t}-x(t) - \eps Z_t =\prod_{j=1}^{\ren(t)}Dh(x^-(t_j))(X^{\eps,+}_{t_0}-x^+(t_0) - \eps Z^{+}_{t_0}) 
+ \hat\JJ^\eps(t) 
+ \hat{\LL}^\eps(t) + \hat{\MM}^\eps(t) + \hat{\NN}^\eps(t) 	
\end{multline} 
where
\begin{equation}\label{E:JLMN}
\begin{aligned}
\hat\JJ^\eps(t) &\triangleq \frac{1}{2}\sum_{i=1}^{\ren(t)} \left(\prod_{j=i+1}^{\ren(t)} Dh(x^-(t_j))\right) \color{black} \la D^2 h(\hat{\bm\eta}^{i,\varepsilon})(X^{\eps, -}_{t_i} - x^{-}(t_i)),(X^{\eps, -}_{t_i} - x^{-}(t_i)) \ra \\
\hat\LL^\eps(t) &\triangleq \frac{1}{2} \sum_{i=1}^{\ren(t)} \left(\prod_{j=i}^{\ren(t)} Dh(x^-(t_{j}))\right) \int_{t_{i-1}}^{t_i} \la D^2 b(\hat{\bm\xi}^{\varepsilon}_s) (X^{\varepsilon}_{s}-x(s)),(X^{\varepsilon}_{s}-x(s)) \ra \thinspace ds \\
& \qquad + \frac{1}{2}\int_{t_{\ren(t)}}^{t} \la D^2 b(\hat{\bm\xi}^{\varepsilon}_s)(X^{\varepsilon}_{s}-x(s)), (X^{\varepsilon}_{s}-x(s)) \ra\thinspace ds \\
\hat\MM^\eps (t) &\triangleq  \sum_{i=1}^{\ren(t)} \left(\prod_{j=i}^{\ren(t)} Dh(x^-(t_{j}))\right) \int_{t_{i-1}}^{t_i} Db(x(s))\left(X^\eps_{s}-x(s)-\eps Z_s\right) \thinspace ds \\ 
&+\int_{t_{\ren(t)}}^t Db(x(s))\left(X^\eps_{s}-x(s)-\eps Z_s\right) \thinspace ds
\\
\hat\NN^\eps (t) &\triangleq  \eps \sum_{i=1}^{\ren(t)} \left(\prod_{j=i}^{\ren(t)} Dh(x^-(t_{j}))\right) \int_{t_{i-1}}^{t_i} \left(\sigma(X^\eps_s) -\sigma(x(s))\right) \ dW_s +\eps \int_{t_{\ren(t)}}^{t} \left(\sigma(X^\eps_s) -\sigma(x(s))\right) \ dW_s 
		\end{aligned}
	\end{equation}
where $\hat{\bm\eta}^{i,\varepsilon}=(\hat{\eta}^{i,\varepsilon,1},\dots,\hat{\eta}^{i,\varepsilon,d})$ with each $\hat{\eta}^{i,\varepsilon,j}$ being a point between $x^-(t_{i})$ and $X^{\varepsilon,-}_{t_i}$, and $\hat {\bm \xi}^{\eps}_s \triangleq (\hat\xi^{\eps,1}_s,\dots,\hat\xi^{\eps,d}_s)$ where each $\hat\xi^{\eps,j}_s$, $1 \le j \le d$, is a point between $x(s)$ and $X^\varepsilon_{s}$ for $s \in [0,\sfT]\setminus\{t_k\}_{k=1}^\sfN$. 
\end{lemma}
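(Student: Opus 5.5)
We mirror the proof of Lemma \ref{L:LLN-path-diff}, working with the error process $R^\eps_t \triangleq X^\eps_t - x(t) - \eps Z_t$. The difference is that we now Taylor-expand $b$ and $h$ to \emph{second} order about the deterministic trajectory, so the Jacobians are evaluated along $x(\cdot)$ rather than at intermediate points.

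\textbf{Step 1: one flow interval.} Fix $t \in [t_{k-1},t_k)$ with $1 \le k \le \sfN$, or $t \in [t_{\sfN},\sfT]$ with $k=\sfN+1$. Subtract \eqref{E:det-state-int-eq} and $\eps$ times \eqref{E:fluct-proc} from \eqref{E:R-time-changed}. Apply the Proposition (Taylor's formula) with $m=2$ to each component $b_i$ about $x(s)$, using the notation \eqref{E:D-squared}. This gives
\[
b(X^\eps_s)-b(x(s)) = Db(x(s))(X^\eps_s-x(s)) + \tfrac12 \la D^2 b(\hat{\bm\xi}^\eps_s)(X^\eps_s-x(s)),(X^\eps_s-x(s))\ra .
\]
Subtracting $\eps Db(x(s))Z_s$ leaves $Db(x(s))R^\eps_s$ plus the quadratic remainder. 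The noise terms combine into $\eps\int(\sigma(X^\eps_s)-\sigma(x(s)))\,dW_s$. Hence
\[
R^\eps_t = R^{\eps,+}_{t_{k-1}} + \hat\II^\eps_k(t),
\]
where $\hat\II^\eps_k(t)$ collects three pieces over $[t_{k-1},t]$: the half-quadratic $D^2b$ integral, the integral of $Db(x(s))R^\eps_s$, and the stochastic integral of $\sigma(X^\eps_s)-\sigma(x(s))$. Letting $t\nearrow t_k$ gives $R^{\eps,-}_{t_k} = R^{\eps,+}_{t_{k-1}} + \hat\II^\eps_k(t_k)$, with the left limit of the stochastic integral justified by continuity of the integrals.

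\textbf{Step 2: the jump.} At $t_k$, expand $h$ about $x^-(t_k)$ to second order:
\[
h(X^{\eps,-}_{t_k}) - h(x^-(t_k)) = Dh(x^-(t_k))(X^{\eps,-}_{t_k}-x^-(t_k)) + \tfrac12\la D^2h(\hat{\bm\eta}^{k,\eps})(X^{\eps,-}_{t_k}-x^-(t_k)),(X^{\eps,-}_{t_k}-x^-(t_k))\ra .
\]
Subtracting $\eps Z^+_{t_k} = \eps Dh(x^-(t_k))Z^-_{t_k}$ gives the linear recursion
\[
R^{\eps,+}_{t_k} = Dh(x^-(t_k))R^{\eps,-}_{t_k} + \tfrac12\la D^2h(\hat{\bm\eta}^{k,\eps})\cdots\ra .
\]
The multiplier $Dh(x^-(t_k))$ is deterministic and independent of $\eps$. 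This is the key structural difference from Lemma \ref{L:LLN-path-diff}.

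\textbf{Step 3: unwinding.} Alternate Steps 1 and 2 forward from $[t_0,t_1)$. By induction on $k$, for $t$ in the $k$-th interval,
\[
R^\eps_t = \prod_{j=1}^{k-1}Dh(x^-(t_j))R^{\eps,+}_{t_0} + \sum_{i=1}^{k-1}\Bigl(\prod_{j=i}^{k-1}Dh(x^-(t_j))\Bigr)\hat\II^\eps_i(t_i) + \sum_{i=1}^{k-1}\Bigl(\prod_{j=i+1}^{k-1}Dh(x^-(t_j))\Bigr)\tfrac12\la D^2h(\hat{\bm\eta}^{i,\eps})\cdots\ra + \hat\II^\eps_k(t),
\]
using the left-multiplication convention \eqref{E:matrix-mult-conv}. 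The inductive step multiplies the previous expression at $t_k^-$ by $Dh(x^-(t_k))$, adds the new quadratic jump term (whose product is empty, hence $I_d$), and then adds $\hat\II^\eps_{k+1}(t)$. Splitting each $\hat\II^\eps_i$ into its three pieces and regrouping gives $\hat\LL^\eps$, $\hat\MM^\eps$ and $\hat\NN^\eps$, while the jump remainders give $\hat\JJ^\eps$. Substituting $k-1=\ren(t)$ yields \eqref{E:CLT-decomp-detailed}.

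The only delicate points are bookkeeping. First, the product ranges: $\prod_{j=i}^{\ren(t)}$ for flow terms but $\prod_{j=i+1}^{\ren(t)}$ for jump terms, because a jump remainder at $t_i$ is created after the multiplication by $Dh(x^-(t_i))$. Second, the left-limit passage in the stochastic and Lebesgue integrals, which is immediate from their continuity in $t$.
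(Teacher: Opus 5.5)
Your proposal is correct and follows essentially the same route as the paper. Both arguments do a second-order Taylor expansion of $b$ and $h$ about the deterministic path on each flow interval and at each impulse. This gives the recursion $R^{\eps,+}_{t_k} = Dh(x^-(t_k))R^{\eps,-}_{t_k} + \tfrac12\langle D^2h(\hat{\bm\eta}^{k,\eps})\cdot,\cdot\rangle$, which is unwound forward in time and regrouped into $\hat\JJ^\eps$, $\hat\LL^\eps$, $\hat\MM^\eps$, $\hat\NN^\eps$, with the same product ranges as in the paper.
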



\begin{proof}[Proof of Lemma \ref{L:CLT-path-diff}]
As in the proof of Lemma \ref{L:LLN-path-diff}, we start by writing $[0,\sfT]$ as the disjoint union $\left(\cup_{k=1}^\sfN [t_{k-1},t_k)\right)\cup [t_\sfN,\sfT]$. We also recall the decomposition for $X^\eps_t$ over the time horizon $[0,\sfT]$ given by \eqref{E:R-time-changed}, and note that similar expressions can be written for $x(t)$ and $Z_t$. Now, let $t \in [t_{k-1}, t_{k})$ for some $1 \le k \le \sfN$ or $t \in [t_{k-1}, \sfT]$ with $k=\sfN+1$. Using equations \eqref{E:det-state-int-eq}, \eqref{E:fluct-proc}, and \eqref{E:R-time-changed}, and adding and subtracting $\int_{t_{k-1}}^{t} Db(x(s)) \left(X^\eps_{s} - x(s)\right) \thinspace ds$ on the right-hand side, we have
\begin{multline*}
X^{\varepsilon}_{t} - x(t)- \eps Z_t
= \left(X^{\varepsilon,+}_{t_{k-1}} - x^+(t_{k-1})-\eps Z^{+}_{t_{k-1}}\right) + \int_{t_{k-1}}^{t}  \left[b(X^{\varepsilon}_{s}) - b(x(s)) - Db(x(s))(X^\eps_s-x(s))\right]\ ds \\+ \int_{t_{k-1}}^{t} Db(x(s))  \left[X^\eps_s - x(s) - \eps Z_s\right]\ ds + \varepsilon \int_{t_{k-1}}^{t} \left[\sigma(X^\eps_s) - \sigma(x(s))\right] \thinspace dW_s 
\end{multline*}
Using Taylor's formula, and recalling the notation in \eqref{E:D-squared}, there exists $\hat {\bm \xi}^{\eps}_s \triangleq (\hat\xi^{\eps,1}_s,\dots,\hat\xi^{\eps,d}_s)$ where each $\hat\xi^{\eps,j}_s$, $1 \le j \le d$, is a point between $x(s)$ and $X^\varepsilon_{s}$ such that 
\begin{multline}\label{E:R-error-k-clt}
X^{\varepsilon}_{t} - x(t) - \eps Z_t = \left(X^{\varepsilon,+}_{t_{k-1}} - x^+(t_{k-1}) -\eps Z^{+}_{t_{k-1}}\right) + \hat{\II}^\eps_k(t), \qquad \text{where} \\
\hat{\II}^\eps_k(t) \triangleq \frac{1}{2} \int_{t_{k-1}}^{t} \la D^2 b(\hat{\bm\xi}^{\varepsilon}_s)\left(X^{\varepsilon}_{s}-x(s)\right), \left(X^{\varepsilon}_{s}-x(s)\right)\ra\thinspace ds + \int_{t_{k-1}}^{t} Db(x(s))\left(X^\eps_{s} - x(s)-\eps Z_s\right)\ ds \\ +\varepsilon \int_{t_{k-1}}^{t} \left[\sigma(X^\eps_s) - \sigma(x(s))\right] \thinspace dW_s 
\end{multline}
As $t \nearrow t_k$, $1 \le k \le \sfN$, the expression above approaches $X^{\eps,-}_{t_k}-x^-(t_k) - \eps Z^{-}_{t_k}$. The resetting rules in \eqref{E:det-state-int-eq}, \eqref{E:stoch-state-int-eq} and \eqref{E:fluct-proc} imply that $X^{\varepsilon,+}_{t_k} - x^+(t_{k})-\eps Z^{+}_{t_k} =h(X^{\varepsilon,-}_{t_k})-h(x^-(t_{k}))- \eps Dh(x^{-}(t_{k})) Z^{-}_{t_k}$. If we now add and subtract $Dh(x^-(t_k))(X^{\eps,-}_{t_k}-x^-(t_k))$, and then use Taylor's formula, we get
\begin{multline}\label{E:R-resetting-k-clt}
X^{\varepsilon,+}_{t_k} - x^+(t_{k})-\eps Z^{+}_{t_k}
=\frac{1}{2} \la D^2 h(\hat{\bm\eta}^{k,\varepsilon})(X^{\varepsilon,-}_{t_k}-x^{-}(t_k)), (X^{\varepsilon,-}_{t_k}-x^{-}(t_k))\ra \\+ Dh(x^-(t_{k}))(X^{\eps, -}_{t_k} - x^-(t_k)-\eps Z^{-}_{t_k}), 
\end{multline} 
where $\hat{\bm\eta}^{k,\varepsilon}=(\hat{\eta}^{k,\varepsilon,1},\dots,\hat{\eta}^{k,\varepsilon,d})$ with each $\hat{\eta}^{k,\varepsilon,j}$ being a point between $x^-(t_{k})$ and $X^{\varepsilon,-}_{t_k}$. We now proceed in a manner similar to that in Lemma \ref{L:LLN-path-diff}. Starting from $[t_0,t_1)$, we work our way forward in time alternately applying \eqref{E:R-error-k-clt} and  \eqref{E:R-resetting-k-clt} in succession. As a result, we obtain that for $t \in [t_{k-1},t_k)$ with $1 \le k \le \sfN$, or $t \in [t_{k-1},\sfT]$ with $k=\sfN+1$, we have
	\begin{multline}\label{E:CLT-decomp-basic}
		X^\eps_{t}-x(t) - \eps Z_t = \prod_{j=1}^{k-1}Dh(x^-(t_j))\left(X^{\eps,+}_{t_0}-x^+(t_0) - \eps Z^{+}_{t_0}\right) \\+ \frac{1}{2}\sum_{i=1}^{k-1}  \left(\prod_{j=i+1}^{k-1} Dh(x^-(t_j))\right) \color{black} \la D^2 h(\hat{\bm\eta}^{i,\varepsilon})(X^{\varepsilon,-}_{t_i}-x^{-}(t_i)), (X^{\varepsilon,-}_{t_i}-x^{-}(t_i)) \ra
		\\+ \sum_{i=1}^{k-1}\left(\prod_{j=i}^{k-1} Dh(x^-(t_j))\right) \hat{\II}^\eps_i(t_i) + \hat{\II}^\eps_k(t)
	\end{multline}
	where the quantities $\hat{\II}^\eps_i(t)$ are as in equation \eqref{E:R-error-k-clt}, and we use the conventions $\sum_{i=1}^0 (\dots) \triangleq \bm 0$ and $\prod_{j=1}^0 (\dots) \triangleq I_d$ with $\bm 0$ and $I_d$ denoting the $d \times d$ zero and identity matrices respectively. 
Carefully expanding the right-hand side of \eqref{E:CLT-decomp-basic} using the expressions for $\hat\II^\eps_i(t)$ from \eqref{E:R-error-k-clt}, we get
\begin{multline}\label{E:CLT-decomp-detailed-O}
X^\eps_{t}-x(t) - \eps Z_t =\prod_{j=1}^{k-1}Dh(x^-(t_j))(X^{\eps,+}_{t_0}-x^+(t_0) - \eps Z^{+}_{t_0}) 
+ \hat\JJ^\eps_k(t)
 + \hat{\LL}^\eps_k(t) + \hat{\MM}^\eps_k(t) + \hat{\NN}^\eps_k(t) 
\end{multline} 
where	
	
\begin{equation}\label{E:JLMNO}
\begin{aligned}
\hat\JJ^\eps_k(t) &\triangleq \frac{1}{2}\sum_{i=1}^{k-1} \left(\prod_{j=i+1}^{k-1} Dh(x^-(t_j))\right) \la D^2 h(\hat{\bm\eta}^{i,\varepsilon})(X^{\varepsilon,-}_{i}-x^{-}(t_i)), (X^{\varepsilon,-}_{i}-x^{-}(t_i))\ra\\
\hat\LL^\eps_k(t) &\triangleq \frac{1}{2} \sum_{i=1}^{k-1} \left(\prod_{j=i}^{k-1} Dh(x^-(t_{j}))\right) \int_{t_{i-1}}^{t_i} \la D^2 b(\hat{\bm\xi}^{\eps}_s)(X^\eps_s -x(s)),(X^\eps_s -x(s)) \ra \thinspace ds \\
&\qquad + \frac{1}{2}\int_{t_{k-1}}^{t} \la D^2b(\hat{\bm\xi}^{\eps}_s)(X^\eps_s -x(s)), (X^\eps_s -x(s)) \ra \thinspace ds \\
\hat\MM^\eps_k(t) &\triangleq  \sum_{i=1}^{k-1} \left(\prod_{j=i}^{k-1} Dh(x^-(t_{j}))\right) \int_{t_{i-1}}^{t_i} Db(x(s))\left(X^\eps_{s}-x(s)-\eps Z_s\right) \thinspace ds 
\\ &+\int_{t_{k-1}}^t Db(x(s))\left(X^\eps_{s}-x(s)-\eps Z_s\right) \thinspace ds
\\
\hat\NN^\eps_k(t) &\triangleq  \eps \sum_{i=1}^{k-1} \left(\prod_{j=i}^{k-1} Dh(x^-(t_{j}))\right) \int_{t_{i-1}}^{t_i} \left(\sigma(X^\eps_s) -\sigma(x(s))\right) \ dW_s +\eps \int_{t_{k-1}}^{t} \left(\sigma(X^\eps_s) -\sigma(x(s))\right) \ dW_s .
\end{aligned}
\end{equation}
Writing equations \eqref{E:CLT-decomp-detailed-O} and \eqref{E:JLMNO} in terms of the number $\ren(t)$ of impulses up to time $t$, we get the stated result.
\end{proof}

\begin{lemma}\label{L:CLT-comparison}
There exists a constant $C_{\ref{L:CLT-comparison}}>0$ such that for all $\eps \in (0,1)$, $t \in [0,\sfT]$, we have
\begin{multline}\label{E:CLT-comparison}
\sup_{0 \le s \le t}\|X^\eps_{s}-x(s)- \eps Z_s\|  \le C_{\ref{L:CLT-comparison}}\left\{ \sup_{0 \le s \le t}\|X^{\varepsilon}_s-x(s)\|^2 
			+ \int_{0}^{t}  \sup_{0\le u\le s}\|X^{\varepsilon}_{u}-x(u)\|^2 \thinspace ds\right. \\  \qquad \left. + \int_0^t \sup_{0 \le u \le s}\|X^\eps_{u}-x(u)-\eps Z_u\| \thinspace ds
			+ \eps \sup_{0\le v \le t}\left\|\int_{0}^{v} (\sigma(X^\eps_u)- \sigma(x(u)))\thinspace dW_u\right\|\right\}. 
\end{multline}
\end{lemma}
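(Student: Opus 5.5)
The plan is to start from the decomposition \eqref{E:CLT-decomp-detailed} of Lemma \ref{L:CLT-path-diff} and bound each of the five terms on its right-hand side pathwise. This mirrors the proof of Lemma \ref{L:LLN-comparison}, with one difference: the products of Jacobians are now evaluated along the deterministic trajectory, $Dh(x^-(t_j))$. The first term vanishes, since $X^{\eps,+}_{t_0}=x^+(t_0)=x_0$ and $Z^+_{t_0}=0$.

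For the products, Assumption \ref{A:resetting-map} gives $\|Dh(x^-(t_j))\|_\sfF\le \sfK_h d$. Any product of at most $\ren(t)+1\le \sfN+1$ such factors is therefore bounded by $(\sfK_h d)^{\sfN+1}$, a constant. We also need a bound on the quadratic forms. For any $\bm\xi$ and $x$, each component satisfies $|\la D^2 f_i(\xi_i)x,x\ra|\le \|D^2 f_i(\xi_i)\|_\sfF\|x\|^2\le d\,\sfK\|x\|^2$. Hence $\|\la D^2 f(\bm\xi)x,x\ra\|\le d^{3/2}\sfK\|x\|^2$, with $\sfK=\sfK_b$ or $\sfK_h$.

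The terms are then handled one at a time.
\begin{enumerate}[(i)]
\item For $\hat\JJ^\eps(t)$: each summand is bounded by $C\|X^{\eps,-}_{t_i}-x^-(t_i)\|^2$ with $t_i\le t$. The key observation is that the left limit is dominated by the supremum of the right-continuous path, namely $\|X^{\eps,-}_{t_i}-x^-(t_i)\|\le \sup_{0\le s\le t}\|X^\eps_s-x(s)\|$, because it is a limit of values $\|X^\eps_s-x(s)\|$ with $s<t_i\le t$. Summing at most $\sfN$ terms gives $C\sup_{0\le s\le t}\|X^\eps_s-x(s)\|^2$.
\item For $\hat\LL^\eps(t)$: the same bounds give $C\int_0^t\|X^\eps_s-x(s)\|^2\,ds\le C\int_0^t\sup_{0\le u\le s}\|X^\eps_u-x(u)\|^2\,ds$, after concatenating the integrals over $[t_{i-1},t_i)$ and $[t_{\ren(t)},t]$ into one integral over $[0,t]$.
\item For $\hat\MM^\eps(t)$: using $\|Db(x(s))\|_\sfF\le \sfK_b d$ yields $C\int_0^t\sup_{0\le u\le s}\|X^\eps_u-x(u)-\eps Z_u\|\,ds$.
\item For $\hat\NN^\eps(t)$: we apply the analogue of \eqref{E:stoch-int-est} with $\sigma(X^\eps_u)$ replaced by $\sigma(X^\eps_u)-\sigma(x(u))$, which follows from the triangle inequality in the same way. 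Each interval integral is then at most $2\sup_{0\le v\le t}\|\int_0^v(\sigma(X^\eps_u)-\sigma(x(u)))\,dW_u\|$, giving $\eps C$ times that supremum.
\end{enumerate}

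Finally, each of the four bounds obtained is nondecreasing in $t$. So after bounding $\|X^\eps_{s}-x(s)-\eps Z_s\|$ for each $s\le t$ by the same bounds evaluated at $s$, and hence at $t$, we can take the supremum over $s\in[0,t]$ and get \eqref{E:CLT-comparison} with $C_{\ref{L:CLT-comparison}}$ the maximum of the constants.

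The only step needing some care is (i), the treatment of left limits at impulse times. There one must check that the quantities $X^{\eps,-}_{t_i}$ entering $\hat\JJ^\eps$ are controlled by the supremum over $[0,t]$ and not by values beyond $t$. This holds because only indices $i\le\ren(t)$ appear, so $t_i\le t$. Everything else is routine bookkeeping of the constants, which depend on $\sfN$, $d$, $\sfK_b$, $\sfK_h$, but not on $\eps$.
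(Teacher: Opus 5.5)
Your proposal is correct and follows essentially the same route as the paper. You start from the decomposition of Lemma \ref{L:CLT-path-diff}, note that the initial term vanishes, and bound $\hat\JJ^\eps$, $\hat\LL^\eps$, $\hat\MM^\eps$, $\hat\NN^\eps$ separately using the uniform Frobenius bounds on $Dh$, $Db$, $D^2h$, $D^2b$ and the triangle-inequality estimate for stochastic integrals over subintervals. Your explicit treatment of the left limits at $t_i\le t$, and of monotonicity in $t$ before taking the supremum, spells out steps the paper leaves implicit.
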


\begin{proof}[Proof of Lemma \eqref{L:CLT-comparison}]
We start by working our way through the terms on the right-hand side of equation \eqref{E:CLT-decomp-detailed}. Note that the first term is zero on account of the initial conditions. Next, we note that from Assumption \ref{A:resetting-map}, we have $\|Dh(x)\|_\sfF \le \sfK_h d$, $\| D^2 h_k(x)\|_\sfF \le \sfK_h d$ for all $x \in \BR^d$, $1 \le k \le d$. Also, straightforward calculations using the Cauchy-Schwarz inequality yield $\|\la D^2 h(\bm\eta)x,x\ra\| \le \sfK_h d^2 \|x\|^2$ for all $x \in \BR^d$ and $\bm\eta = (\eta^1,\dots,\eta^d)$ with $\eta^i \in \BR^d$. Using Assumption \ref{A:vf} and arguing as above, we obtain virtually identical estimates for $b$ and its derivatives with $\sfK_b$ in place of $\sfK_h$. We will repeatedly use the fact that $\ren(t)$ is non-decreasing with $\ren(\sfT)=\sfN < \infty$.

From the first equation in \eqref{E:JLMN}, we see that 
\begin{equation*}
\|\hat\JJ^\eps(t)\| \le \frac{1}{2}\sum_{i=1}^{\ren(t)} \left(\prod_{j=i+1}^{\ren(t)} \|Dh(x^-(t_j))\|_\sfF\right)  \left\|\la D^2 h(\hat{\bm\eta}^{i,\varepsilon})(X^{\eps, -}_{t_i} - x^{-}(t_i)),(X^{\eps, -}_{t_i} - x^{-}(t_i)) \ra\right\|.
\end{equation*}
Hence, we have $\|\hat\JJ^\eps(t)\| \le \frac12 \sum_{i=1}^{\ren(t)} (\sfK_h d)^{\ren(t)} \sfK_h d^2 \|X^{\eps,-}_{t_i}-x^-(t_i)\|^2$. Noting that the sum here only involves impulse times $t_i$ satisfying $t_i \le t$, it follows that there exists $C_1>0$ such that
\begin{equation}\label{E:J-estimate}
\|\hat\JJ^\eps(t)\| \le C_1 \sup_{0 \le s \le t} \|X^\eps_s-x(s)\|^2 \quad \text{for all $t \in [0,\sfT]$.}
\end{equation}
Next, we observe that
\begin{multline*}
\|\hat\LL^\eps(t)\| \le \frac{1}{2} \sum_{i=1}^{\ren(t)} \left(\prod_{j=i}^{\ren(t)} \|Dh(x^-(t_{j}))\|_\sfF \right) \int_{t_{i-1}}^{t_i} \left\|\la D^2 b(\hat{\bm\xi}^{\varepsilon}_s) (X^{\varepsilon}_{s}-x(s)),(X^{\varepsilon}_{s}-x(s)) \ra\right\| \thinspace ds 
 \\ + \frac{1}{2}\int_{t_{\ren(t)}}^{t} \left\|\la D^2 b(\hat{\bm\xi}^{\varepsilon}_s)(X^{\varepsilon}_{s}-x(s)), (X^{\varepsilon}_{s}-x(s)) \ra \right\| \thinspace ds
\end{multline*}
Estimating terms as we did for the case of $\hat\JJ^\eps(t)$, we get\\
 $\|\hat\LL^\eps(t)\| \le \frac12 \sum_{i=1}^{\ren(t)} \left(\sfK_h d\right)^{\ren(t)} \int_{t_{i-1}}^{t_i} \sfK_b d^2 \|X^\eps_s-x(s)\|^2 \thinspace ds + \frac12 \int_{t_{\ren(t)}}^t \sfK_b d^2 \|X^\eps_s-x(s)\|^2 \thinspace ds$. Hence, there exists a constant $C_2>0$ such that
\begin{equation}\label{E:L-estimate}
\|\hat\LL^\eps(t)\| \le C_2 \int_0^t \sup_{0 \le u \le s} \|X^\eps_u-x(u)\|^2 \thinspace ds \quad \text{for all $t \in [0,\sfT]$.}
\end{equation}
From the third equation in \eqref{E:JLMN}, we see that
\begin{multline*}
\|\hat\MM^\eps (t)\| \le   \sum_{i=1}^{\ren(t)} \left(\prod_{j=i}^{\ren(t)} \|Dh(x^-(t_{j}))\|_\sfF \right) \int_{t_{i-1}}^{t_i} \|Db(x(s))\|_\sfF \left\|X^\eps_{s}-x(s)-\eps Z_s\right\| \thinspace ds \\
+\int_{t_{\ren(t)}}^t \|Db(x(s))\|_\sfF \|X^\eps_{s}-x(s)-\eps Z_s\| \thinspace ds,
\end{multline*}
which yields $\|\hat\MM^\eps(t)\| \le \sum_{i=1}^{\ren(t)} \left(\sfK_h d\right)^{\ren(t)} \int_{t_{i-1}}^{t_i} \sfK_b d \|X^\eps_s - x(s) - \eps Z_s \| \thinspace ds + \int_{t_{\ren(t)}}^t \sfK_b d \|X^\eps_s - x(s) - \eps Z_s \| \thinspace ds$. Hence, there exists a constant $C_3>0$ such that 
\begin{equation}\label{E:M-estimate}
\|\hat\MM^\eps(t)\| \le C_3 \int_0^t \sup_{0 \le u \le s} \|X^\eps_u - x(u) - \eps Z_u\| \thinspace ds \quad \text{for all $t \in [0,\sfT]$.}
\end{equation}
Next, we compute that
\begin{multline*}
\|\hat\NN^\eps (t)\| \le  \eps \sum_{i=1}^{\ren(t)} \left(\prod_{j=i}^{\ren(t)} \|Dh(x^-(t_{j}))\|_\sfF\right) \left\|\int_{t_{i-1}}^{t_i} \left(\sigma(X^\eps_s) -\sigma(x(s))\right) \thinspace dW_s\right\| \\+\eps \left\|\int_{t_{\ren(t)}}^{t} \left(\sigma(X^\eps_s) -\sigma(x(s))\right) \thinspace dW_s\right\|
\end{multline*}
Using the fact that
\begin{equation*}
\left\| \int_s^t \left(\sigma(X^\eps_u)-\sigma(x(u)) \right) dW_u\right\| \le 2 \sup_{0 \le v \le t} \left\| \int_0^v \left(\sigma(X^\eps_u)-\sigma(x(u)) \right) dW_u\right\| \quad \text{for all $0 \le s < t \le \sfT$,}
\end{equation*}
we easily get 
$\|\hat\NN^\eps(t)\| \le 2 \eps \left(\sum_{i=1}^{\ren(t)} \left(\sfK_h d\right)^{\ren(t)} + 1 \right) \sup_{0 \le v \le t} \left\|\int_0^v \left(\sigma(X^\eps_u)-\sigma(x(u))\right) \thinspace dW_u\right\|$.
Hence there exists $C_4>0$ such that 
\begin{equation}\label{E:N-estimate}
\|\hat\NN^\eps(t)\| \le \eps C_4 \sup_{0 \le v \le t} \left\|\int_0^v \left(\sigma(X^\eps_u)-\sigma(x(u))\right) \thinspace dW_u\right\| \quad \text{for all $t \in [0,\sfT]$.}
\end{equation}
Putting together equations \eqref{E:J-estimate}--\eqref{E:N-estimate}, we easily get the stated claim.
\end{proof}

\begin{lemma}\label{L:diffn-diff-est}
For $\eps \in (0,1)$, $t \in [0,\sfT]$, we have
\begin{equation*}
\BE\left[ \sup_{0 \le v \le t} \left\| \int_0^v \left(\sigma(X^\eps_u)-\sigma(x(u))\right) \thinspace dW_u \right\|\right] \le 2 \sfK_\sigma r \left\{\BE\left[\int_0^t \sup_{0 \le u \le s}\|X^\eps_u-x(u)\|^2 \thinspace ds \right]\right\}^{1/2}.
\end{equation*}
\end{lemma}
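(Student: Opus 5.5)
The plan is to mirror the proof of Lemma \ref{L:Diff_coeff}, now with the integrand $\sigma(X^\eps_u)-\sigma(x(u))$ in place of $\sigma(X^\eps_u)$. The deterministic path $x(\cdot)$ is c\`adl\`ag with finitely many jumps on $[0,\sfT]$, hence bounded and measurable. Together with the square-integrability of $X^\eps$ from Lemma \ref{L:stoch-traj-UB} and the linear growth of $\sigma$, this makes the new integrand progressively measurable and square integrable, so the stochastic integral is a genuine continuous $L^2$ martingale.

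The steps run as follows. First, monotonicity of $L^p$ norms bounds the expectation of the supremum by the square root of the expectation of the squared supremum. Next, exactly as in \eqref{E:sup-est}, I expand the squared norm componentwise and use $(\sum_{j=1}^r |a_j|)^2 \le r^2 \sum_j a_j^2$ (crude but sufficient) to get
\[
\sup_{0\le v\le t}\Bigl\|\int_0^v (\sigma(X^\eps_u)-\sigma(x(u)))\,dW_u\Bigr\|^2 \le r^2\sum_{i=1}^d\sum_{j=1}^r \sup_{0\le v\le t}\Bigl|\int_0^v (\sigma_{ij}(X^\eps_u)-\sigma_{ij}(x(u)))\,dW^j_u\Bigr|^2 .
\]
Taking expectations, Doob's maximal inequality gives the factor $4$, and the It\^o isometry then turns the right side into $4r^2\,\BE\int_0^t \|\sigma(X^\eps_s)-\sigma(x(s))\|_\sfF^2\,ds$.

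Finally, the Lipschitz bound \eqref{E:K_sigma} gives $\|\sigma(X^\eps_s)-\sigma(x(s))\|_\sfF^2 \le \sfK_\sigma^2\|X^\eps_s-x(s)\|^2 \le \sfK_\sigma^2 \sup_{0\le u\le s}\|X^\eps_u-x(u)\|^2$. Taking square roots yields the constant $2\sfK_\sigma r$, as claimed.

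There is no real obstacle here. The only point needing care is justifying the application of Doob and It\^o isometry, i.e.\ checking that $\BE\int_0^t\|\sigma(X^\eps_s)-\sigma(x(s))\|_\sfF^2\,ds<\infty$. This follows from Lemma \ref{L:stoch-traj-UB} together with the boundedness of $x$ on $[0,\sfT]$ and the linear growth of $\sigma$. Alternatively, if that expectation were infinite, the inequality would hold trivially.
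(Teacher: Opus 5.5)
Your proposal is correct and matches the paper's proof step for step: the componentwise bound with the crude factor $r^2$, Doob's $L^2$ maximal inequality (factor $4$), the It\^o isometry, the Lipschitz bound on $\sigma$, and finally $\BE[|Z|]\le(\BE[|Z|^2])^{1/2}$ to produce the constant $2\sfK_\sigma r$. Your remark on integrability, that the inequality holds trivially if the right-hand side is infinite, is a small piece of extra care that the paper leaves implicit.
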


\begin{proof}[Proof the Lemma]
For $1 \le i \le d$, $1 \le j \le r$, set
\begin{multline*}
\sfM^\eps_{ij}(t) \triangleq \int_0^t \left(\sigma_{ij}(X^\eps_u)-\sigma_{ij}(x(u))\right) \thinspace dW^j_u \quad \text{and note that} \\ \left\|\int_0^t \left(\sigma(X^\eps_u)-\sigma(x(u))\right) \thinspace dW_u\right\|^2 = \sum_{i=1}^d \left(\sum_{j=1}^r \sfM^\eps_{ij}(t)\right)^2 \le \sum_{i=1}^d \left(\sum_{j=1}^r |\sfM^\eps_{ij}(t)| \right)^2 \le r^2 \sum_{i=1}^d \sum_{j=1}^r |\sfM^\eps_{ij}(t)|^2
\end{multline*}
Using Doob's maximal inequality, followed by the Ito isometry, we get 
\begin{equation*}
\BE\left[\sup_{0 \le v \le t}|M^\eps_{ij}(v)|^2\right] \le 4\BE\left[|M^\eps_{ij}(t)|^2\right] = 4\BE\left[\int_0^t |\sigma_{ij}(X^\eps_s)-\sigma_{ij}(x(s))|^2 \thinspace ds\right]
\end{equation*}
This yields
\begin{multline*}
\BE\left[ \sup_{0 \le v \le t} \left\| \int_0^v \left(\sigma(X^\eps_u)-\sigma(x(u))\right) \thinspace dW_u \right\|^2\right] \le 4r^2 \sum_{i=1}^d \sum_{j=1}^r \BE\left[\int_0^t \left(\sigma_{ij}(X^\eps_s)-\sigma_{ij}(x(s))\right)^2 \thinspace ds\right] \\ = 4r^2 \BE\left[ \int_0^t \|\sigma(X^\eps_s)-\sigma(x(s))\|_\sfF^2 \thinspace ds\right] \le 4 \sfK_\sigma^2 r^2 \BE\left[\int_0^t \sup_{0 \le u \le s}\|X^\eps_u-x(u)\|^2 \thinspace ds \right]
\end{multline*}
where the latter follows using the Lipschitz continuity of $\sigma$ as in Assumption \ref{A:vf}. Recalling that $\BE[|Z|] \le (\BE[|Z|^2])^{1/2}$ for a square integrable random variable $Z$, we now get the stated result.
\end{proof}

We now provide the proof of Theorem \ref{T:CLT}.

\begin{proof}[Proof of Theorem \ref{T:CLT}]
Taking expectations in equation \eqref{E:CLT-comparison}, we get
\begin{multline*}
\BE\left[\sup_{0 \le s \le t}\|X^\eps_{s}-x(s)- \eps Z_s\|\right]  \le C_{\ref{L:CLT-comparison}}\left\{ \BE\left[\sup_{0 \le s \le t}\|X^{\varepsilon}_s-x(s)\|^2\right] 
+ \int_{0}^{t}  \BE\left[\sup_{0\le u\le s}\|X^{\varepsilon}_{u}-x(u)\|^2 \right]\thinspace ds\right. \\  \qquad \left. + \int_0^t \BE\left[\sup_{0 \le u \le s}\|X^\eps_{u}-x(u)-\eps Z_u\|\right] \thinspace ds
+ \eps 2 \sfK_\sigma r \left(\int_0^t \BE\left[\sup_{0 \le u \le s}\|X^\eps_u-x(u)\|^2\right] \thinspace ds \right)^{1/2}\right\}. 
\end{multline*}
Since $\BE\left[\sup_{0 \le s \le t}\|X^\eps_s-x(s)\|^2\right] \le C_{\ref{T:LLN}} \eps^2$, we see that there exists $C>0$ such that
\begin{equation*}
\BE\left[\sup_{0 \le s \le t}\|X^\eps_{s}-x(s)- \eps Z_s\|\right]  \le C\left\{\eps^2 + \int_0^t \BE\left[\sup_{0 \le u \le s}\|X^\eps_{u}-x(u)-\eps Z_u\|\right] \thinspace ds\right\}
\end{equation*}
for all $t \in [0,\sfT]$, $\eps \in (0,1)$. Gronwall's inequality now yields the stated claim \eqref{E:CLT-rate}.
\end{proof}


\section{Numerical example and simulation}\label{S:Example}

In this section, we numerically illustrate our primary result, Theorem \ref{T:CLT}, using the periodically kicked nonlinear pendulum with state-dependent kick sizes as a prototypical example.  
The dynamics of the undamped pendulum are governed by 
\begin{equation}\label{E:Pendulum}
    \begin{aligned}
\dot{x}_1 &= x_2, \\
\dot{x}_2 &= -\alpha\sin x_1
\end{aligned}
\end{equation}
where $x_1$ and $x_2$ represent the angular position and velocity of
the pendulum, respectively, and $\alpha>0$ is a constant.
In line with the problem hypotheses, we use the resetting law
\begin{equation}\label{E:resetting-pendulum}
h(x_1,x_2)
\triangleq (x_1, x_2+0.1 \sin{x_1})^{\top};
\end{equation}
the choice of $h$ in \eqref{E:resetting-pendulum} is motivated by the map used in \cite{LinYoung_JFPTA} to study periodically kicked linear shear flow with a hyperbolic limit cycle, adapted to be consistent with Assumption \ref{A:vf} (Lipschitz continuity and linear growth). 
If the impulses are assumed to arrive in a time periodic manner at times $\{t_n\}_{n=1}^\infty$, then with $x = (x_1,x_2) \in \mathbb{R}^2$, the system described in \eqref{E:Pendulum}, \eqref{E:resetting-pendulum} matches \eqref{E:sie-det-per} with $
b(x) = (x_2, -\alpha\sin x_1)^{\top}$, 
$h(x) = (x_1, x_2+0.1 \sin{x_1})^{\top}$.
For simplicity, we set the diffusion matrix $\sigma$ to be the $2 \times 2$ identity matrix. 

\medskip
Having set the stage, the trajectories $x(t)$, $X_t^{\varepsilon}$, and $Z_t$, which are governed by \eqref{E:det-state-int-eq}, \eqref{E:stoch-state-int-eq}, and \eqref{E:fluct-proc}, respectively, are computed numerically in \textsc{matlab}.
Using the the Euler--Maruyama method \cite{higham2001algorithmic, KloedenPlaten}, we generate 1000 sample paths of $X_t^{\varepsilon}$
and $Z_t$ with parameter values $\Delta t = 2^{-12}$, $\sfT = 8$, $\alpha= 1$,  and initial conditions $X_{1}^{\varepsilon}(0) = x_1(0) = 0.5$, $X_{2}^{\varepsilon}(0) = x_2(0) = 0.5$, and $Z_1(0) = Z_2(0) = 0$. As the obtained result Theorem \ref{T:CLT} on approximating $X_t^{\varepsilon}$ by $A^\eps_t \triangleq x(t) + \varepsilon Z(t)$ is valid in a path-wise sense, the same Brownian increments have been used for $X_t^{\varepsilon}$ and $Z_t$.

First, Figures 1 and 2 show behavior of individual sample paths for purposes of visualization and preliminary comparison. In Figure 1, a sample trajectory of  $x_i(t)$, $X^\eps_i(t)$, $A^\eps_i(t) \triangleq x_i(t) + \eps Z_i(t)$, $i \in \{1,2\}$, with $\varepsilon = 2^{-4}$, is generated and the error computed as a function of time, showing good agreement between the true stochastic trajectory $X^\eps_i(t)$ and its first order expansion $A^\eps_i(t)$. 
It is worth noting that the sample path of the approximating process $A^\eps_t$ traces the path of $X^\eps_t$ more closely than the deterministic path $x(t)$; this is consistent with the error sizes of $\eps$ and $\eps^2$ in Theorems \ref{T:LLN} and \ref{T:CLT}, respectively. 
Next, Figure 2 shows the probabilistic aspect of our result through a comparison of the pre-limit fluctuation process $Y_i^{\eps}(t) \triangleq \frac{X_i^{\varepsilon}(t)- x_i(t)}{\eps}$ against the limiting fluctuation process $Z_i(t)$ for $\varepsilon = 2^{-5}$. 
In both cases, we note that on account of our specific choice of $h$ in \eqref{E:resetting-pendulum}, the trajectories of the first component are continuous at the impulse times (represented by solid dots) with only the second component displaying jumps.

Finally, to explore the dependence of the errors in Theorems \ref{T:LLN} and \ref{T:CLT} on $\eps$, 
we consider varying values of $\varepsilon = 2^{-i}$, $1 \le i \le 10$, and compute the empirical mean over 1000 sample paths of the errors $ \sup_{0\le t \le \sfT} |X_j^{\varepsilon}(t) - x_j(t)|$ and $ \sup_{0\le t \le \sfT} |X_j^{\varepsilon}(t) - A^\eps_j(t)|$, $j=1,2$. The resulting quantities, denoted by $(e_1,e_2)$ in both cases, are plotted in Figure 3 on a $\log_2$--$\log_2$ scale. The slopes are found to be approximately $1$ and $2$, consistent with the theoretical results.

%

\begin{figure}[ht]\label{F:Figure1}
    \centering
    \begin{subfigure}{0.48\textwidth}
        \centering
        \includegraphics[width=1.15\linewidth]{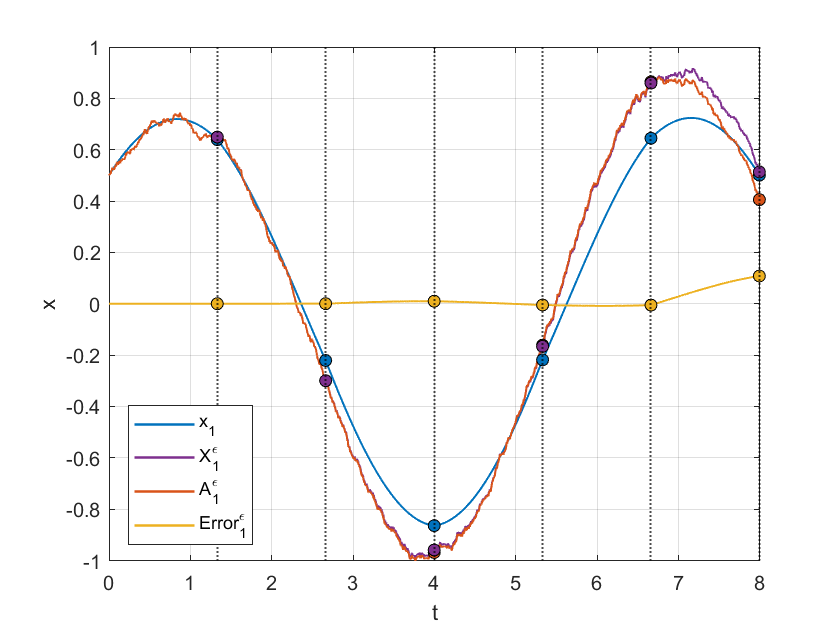}
        \caption{First coordinate}
    \end{subfigure}
    \hfill
    \begin{subfigure}{0.48\textwidth}
        \centering
        \includegraphics[width=1.15\linewidth]{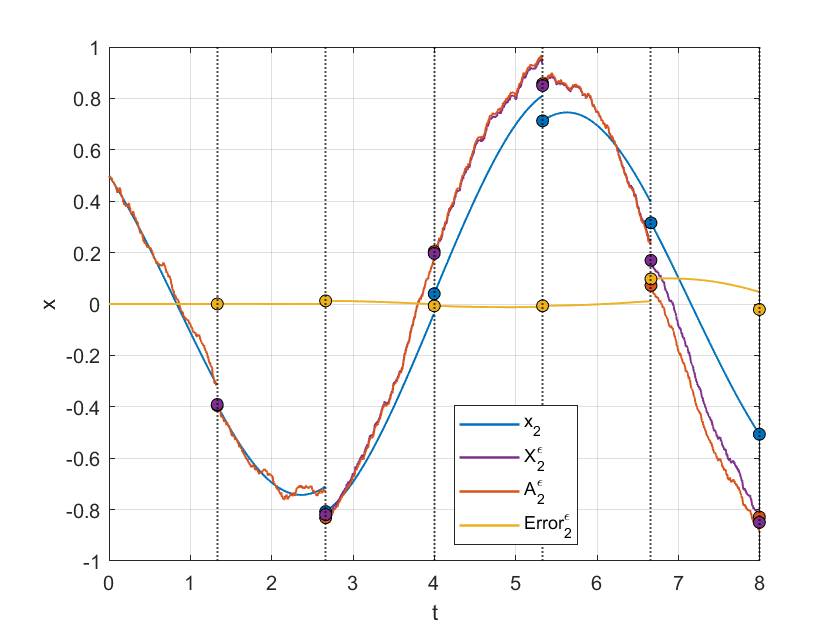}
        \caption{Second coordinate}
    \end{subfigure}
    \caption{Sample paths of the components $X_i\triangleq X^\eps_i(t)$, $x_i(t)$,  $A^\eps_i\triangleq x_i+\eps Z_i(t)$ and $Error^\eps_i \triangleq X^\eps_i - A^\eps_i$ for $i=1, 2$ and $\eps=2^{-4}$.}
\end{figure}

\begin{figure}[ht]
    \centering
    \begin{subfigure}{0.48\textwidth}
        \centering
        \includegraphics[width=1.15\linewidth]{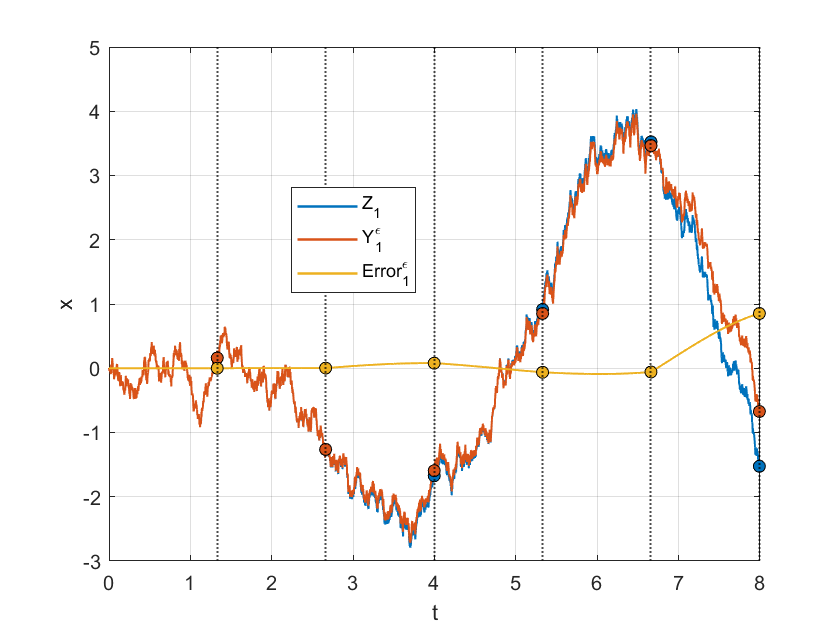}
        \caption{First Coordinate}
    \end{subfigure}
    \hfill
    \begin{subfigure}{0.48\textwidth}
        \centering
        \includegraphics[width=1.15\linewidth]{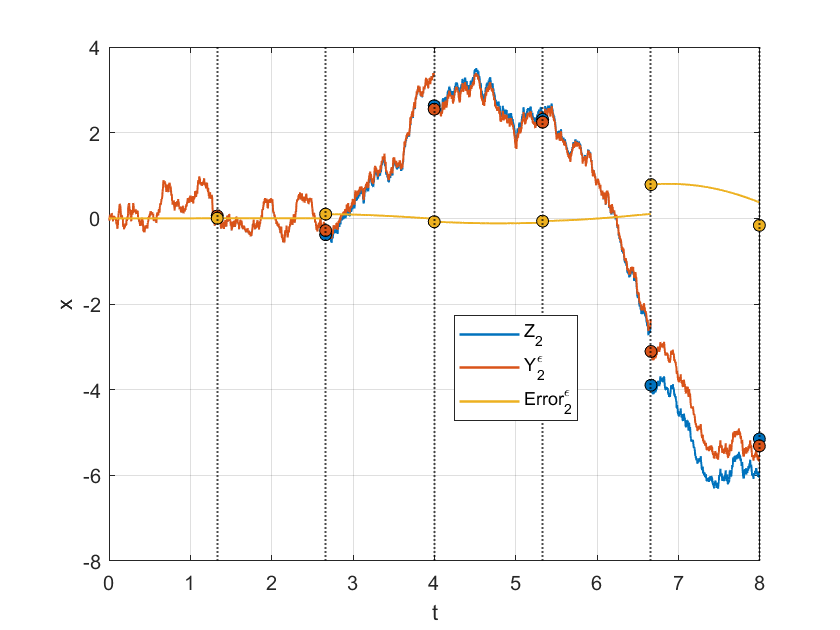}
        \caption{Second Coordinate}
    \end{subfigure}
    \caption{Sample paths of the components $Y_i\triangleq \frac{X^\eps_i(t)- x_i(t)}{\eps}$, $Z_i(t)$ and $Error^\eps_i \triangleq Y^\eps_i - Z_i$ for $i=1, 2$ and $\eps=2^{-5}$ .}
\end{figure}
\begin{figure}[ht]
    \centering
    \begin{subfigure}{0.48\textwidth}
        \centering
        \includegraphics[width=1.15\linewidth]{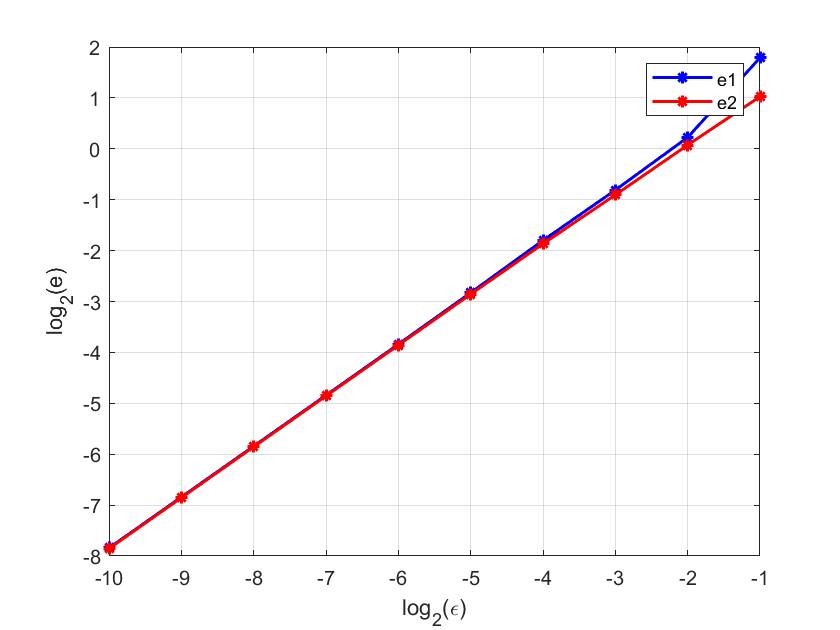}
        \caption{LLN Error in both coordinates}
    \end{subfigure}
    \hfill
    \begin{subfigure}{0.48\textwidth}
        \centering
        \includegraphics[width=1.15\linewidth]{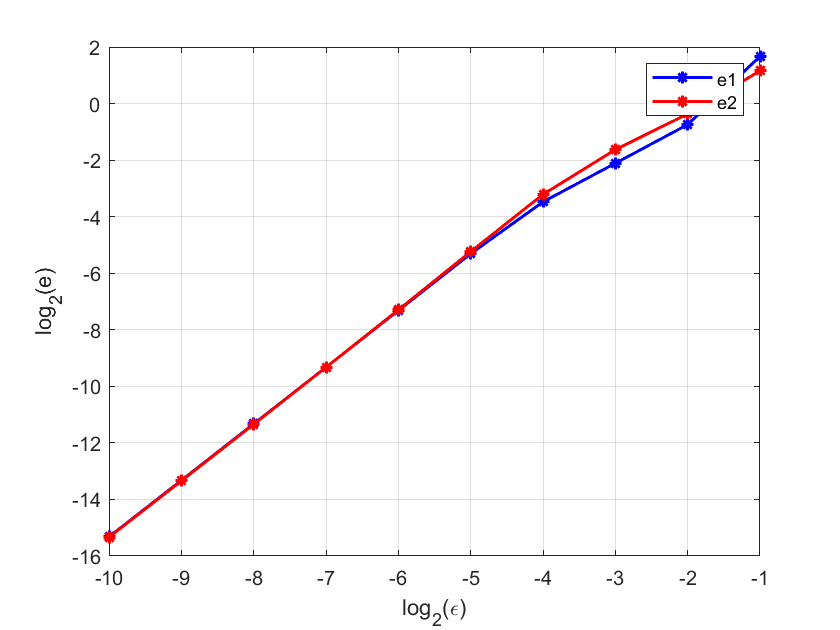}
        \caption{CLT Error in both coordinates}
    \end{subfigure}
    \caption{In the figure (A) the components $(e_1, e_2)$ which are the mean of $\sup_{0\le t\le \sfT}|X^\eps_i(t) - x_i(t)|$ and in the figure (B) the components $(e_1, e_2)$ which are the mean of $\sup_{0\le t\le \sfT}|X^\eps_i(t) - A^\eps_i(t)|$,  over $1000$ sample paths are plotted on a $log_2$--$log_2$ scale. The values of $e_i, i=1, 2$ decrease with increasing $i$, where the values of $\eps=2^{-i}, 1\le i\le 10$.}
   
\end{figure}

\bibliographystyle{alpha}
\bibliography{ImpactsLiterature}

\end{document}